\documentclass[hidelinks,onefignum,onetabnum]{siamart250211}  	 
\usepackage{graphicx}
\usepackage[english]{babel}
\usepackage{amsmath}
\usepackage{color}
\usepackage{float}
\usepackage{caption}
\usepackage{subcaption}
\usepackage{hyperref}
\usepackage{lipsum}
\usepackage{amsfonts}
\usepackage{graphicx}
\usepackage{epstopdf}
\usepackage{algorithmic}
\ifpdf
  \DeclareGraphicsExtensions{.eps,.pdf,.png,.jpg}
\else
  \DeclareGraphicsExtensions{.eps}
\fi

\newsiamremark{remark}{Remark}

\headers{Bounds for internal solutions from boundary data}{V. Druskin, S. Moskow, and M. Zaslavsky}

\title{On optimality and bounds for internal solutions generated from boundary data-driven Gramians}

\author{V. Druskin
\thanks{Worcester Polytechnic Institute, Department of Mathematical Sciences,
Stratton Hall,
100 Institute Road, Worcester MA, 01609 and Southern Methodist University, Department of Mathematics, Clements Hall, 3100 Dyer st,
Dallas, TX 75205 (\email{vdruskin1@gmail.com}).}
\and S. Moskow
\thanks{Department of Mathematics, Drexel University, Korman Center, 3141 Chestnut Street, Philadelphia, PA 19104
(\email{slm84@drexel.edu}).}
\and M. Zaslavsky
\thanks{Southern Methodist University, Department of Mathematics, Clements Hall, 3100 Dyer st,
Dallas, TX 75205 (\email{mzaslavskiy@smu.edu}).}}

\usepackage{amsopn}

\begin{document}
\maketitle 
\begin{abstract} We consider the computation of internal solutions for a time domain plasma wave equation with unknown coefficients from the data obtained by sampling its transfer function at the boundary. The computation is performed by transforming known background snapshots using the Cholesky decomposition of the data-driven Gramian.  We show that this approximation is asymptotically close to  the projection of the true internal solution onto the subspace of background snapshots.  This allows us to derive  a generally applicable bound for the error in the approximation of internal fields from boundary data 
for a time domain plasma wave equation with an unknown potential $q$. For general $q\in L^\infty$, we prove convergence of these data generated internal fields in one dimension for two examples. The first is for piecewise constant initial data and sampling $\tau$ equal to the pulse width. The second is piecewise linear initial data and sampling at half the pulse width. We show that in both cases the data generated solutions converge in $L^2$ at order $\sqrt{\tau}$. We present numerical experiments validating the result and the sharpness of this convergence rate. \end{abstract}
\section{Introduction}

There is a vast literature on the inverse scattering problem of determining an unknown potential $q$ from boundary data for the plasma wave equation \begin{equation}\label{waveeqintro}
u_{tt}  -\Delta u + q(x) u=   0, 
\end{equation} see, for example \cite{9357476,gilman2015mathematical,WaAnLi,CaCoMo, Burfeindt2,lagergren2021deep,10352835,cheney2021procedure,Virieux2016AnIT,Kepley2016GeneratingVI, Malcolm2007IdentificationOI}.
It is well known that knowledge of the interior solution $u$ makes the reconstruction problem for $q$ much easier.   Indeed, some methods to compute $q$ from boundary data construct an approximate $u$ as an intermediary step \cite{Kepley2016GeneratingVI}, and the ability to obtain internal data is a main advantage to coupled physics, or hybrid inversion methods \cite{Ba}. 

One class of inversion methods use reduced order models, or ROMs. 
Reduced order models (ROMs) in this context are finite dimensional versions of (\ref{waveeqintro}) that can be determined from the boundary data, see for example \cite{doi:10.1137/1.9781611974829.ch7}, and have led to efficient inversion algorithms \cite{borcea2011resistor,borcea2014model,druskin2016direct,druskin2018nonlinear,borcea2017untangling,borcea2019robust}. In the work \cite{BoDrMaMoZa}, it was first found that one could use ROMs to produce accurate internal data from the boundary data only.  These approximate fields can then be combined with the Lippmann-Schwinger integral equation for image reconstruction \cite{DrMoZa, DrMoZa2, BoGaMaZi, DrMoZa3, DrMoZa4}, where the image quality depends on the accuracy of the approximation of the internal fields. The accuracy of these fields has mostly been explained in the following intuitive (but non rigorous) way: after sequential orthogonalization of the time snapshots, the perturbed and background orthogonalized snapshots are quite similar due to the cancellation of the reflected wave. 

One result about the data generated snapshots is in the appendix of \cite{BoGaMaZi}, where the authors construct an exact solution for the case of a piecewise constant medium in one dimension with interfaces exactly coinciding with the time steps.  They show that if the initial wave starts with support away from the interfaces, the data-generated solutions are exact. This is because for this special medium, the true snapshots are exactly a linear combination of the background snapshots. In this paper,  we consider a general setup, where obtaining the exact solution is not possible.  We 
show that the error in the data-generated solutions can be controlled by the error in the best approximation of the true solutions from the space of background snapshots, and these errors become asymptotically close if the error in the best approximation is small.   Then, for well-chosen initial waves, we obtain convergence of the data generated internal solutions for general one-dimensional media. The general error bounds hold for any SISO (single input-single output) problem regardless of dimension, however, in higher dimensions a SISO background subspace will not generally be rich enough for convergence. In higher dimensions a MIMO setup (multiple input -multiple output) will be necessary for convergence, and the approach presented in this manuscript provides a framework for a rigorous higher dimensional study. 
Convergence is based on the following steps:  
\begin{enumerate}
\item The data generated internal snapshots can be characterized as the unique (admissible) linear combination of background snapshots that exactly interpolate the true Gramian (mass matrix) $M$. 

\item For a sufficiently rich space of background snapshots, the best approximations will have a mass matrix that is close to the true mass matrix.  We show a stability result for the mapping from the mass matrix to the snapshots, and this relates the data-generated solutions to these best approximations. 

\item Since the background snapshots are in the approximation space, the best approximation error is based on the regularity of the scattered field, which is much more regular than the singular pulse.  
\end{enumerate}
The analysis shows that the shape of the pulse and the time sampling interval should be such that the background snapshots form a good approximation space; but not oversampled to cause the mass matrix to be badly conditioned. We prove convergence for two one-dimensional examples. In the first example we use a piecewise constant approximate delta pulse with sampling equal to the pulse width $\tau$.  In the second example, we probe with a piecewise linear approximate delta pulse with the time sampling $\tau$ at half of the pulse width. In both cases we have convergence of the solutions on the order of $\sqrt{\tau}$ for $q\in L^\infty$.

The paper is organized as follows. In Section 2, we describe precisely the inverse problem. The method to construct the approximate internal fields from the boundary data is 
described in Section 3, along with a proof that they interpolate the Gramian exactly. In Section 4 we prove a general error bound. The main results are Proposition 2 and Corollary 1, showing that these data generated internal fields have error that is asymptotically close to the error in the projections of the true solutions onto the background snapshot space. In Section 5 we show that by using a step function initial wave, we obtain convergence as the sampling interval approaches zero. We consider a piecewise linear hat initial wave in Section 6. In Section 7 we discuss  how this framework works in higher dimensions, and in Section 8 we show numerical experiments. Section 9 contains a discussion of other possible generalizations and future work. 
\section{Problem setup}  We consider first a single input/single output (SISO) problem. That is, for now we assume we have just one source collocated with the receiver, and
consider the following wave model problem for a domain $\Omega\subset\mathbb{R}^n$
\begin{equation}\label{waveeq}
u_{tt}  -\Delta u + q(x) u=   0 \ \ \mbox{in}  \ \Omega\times [ 0,\infty)
\end{equation}
with initial conditions
\begin{eqnarray} \label{waveeqic}
u ( t=0) &=& g \ \mbox{in}\  \Omega  \\
u_t ( t=0) &=&  0\  \mbox{in}\  \Omega\\ {\partial u\over{\partial\nu}} &=& 0 \ \mbox{on} \ \partial\Omega \label{waveeqid}\end{eqnarray}
where   $ g $ is initial data representing a localized source near the boundary,  and we assume homogeneous Neumann boundary conditions on the spatial boundary $\partial \Omega$. We assume $q(x)\ge 0$ is our unknown potential, not necessarily small, but with compact support.   The exact forward solution to (\ref{waveeq}-\ref{waveeqid}) is 
\begin{equation}\label{exactsolution}  u(x,t) = \cos{(\sqrt{-\Delta+q}t)} g(x), \end{equation}
where the square root and cosine are defined via the spectral theorem. This solution is assumed to be unknown, except near the receiver.  Assume we measure the solution back at the source at the $2n-1$ evenly spaced time steps $t= k\tau$ for $k=0,\dots, 2n-2$, the receiver modeled by 
\begin{eqnarray}\label{datadef} F(k\tau ) &= &\int_\Omega g(x) u(x,k\tau) dx\nonumber \\ &=&   \int_\Omega g(x) \cos{(\sqrt{-\Delta+q }k\tau)} g(x) dx .\end{eqnarray}
The inverse coefficient problem one may consider is as follows:  Given $$ \{ F(k \tau) \} \  \mbox{ for } \  k=0,\dots, 2n-2,$$
reconstruct $q$. To this end, we first compute, from the boundary data only, approximations of the internal snapshots $u_k = u(x,k\tau)$ for $k=0,\ldots, n-1$. It is the accuracy of these data generated internal solutions that is the subject of this paper. That is, the problem we consider in this manuscript is:
Given $$ \{ F(k \tau) \} \  \mbox{ for } \  k=0,\dots, 2n-2,$$
reconstruct  $$ \{ u(x,k\tau) \} $$  for $k=0, \ldots, n-1$.

\section{Construction of internal solutions and a characterization}
Consider the true snapshots $$u_k = u(x,k\tau).$$
An essential component of the reduced order model is the Gramian, or mass matrix $M$ given by 
\begin{equation} \label{massmatrixj} M_{kl}= \int_\Omega u_k u_l  dx  \end{equation}
for $k,l =0,\ldots,n-1$. It is well known that $M$ can be obtained from the data. To see this, $M$ can be written as 
\begin{equation} \label{massmatrixj2} M_{kl} = \int_\Omega  g(x) \cos{(\sqrt{-\Delta+q}k\tau)} \cos{(\sqrt{-\Delta +q }l\tau)} g(x) dx.  \end{equation}
thanks to the formula (\ref{exactsolution}).  Then, from (\ref{datadef}), (\ref{massmatrixj2}),  and the cosine angle sum formula, one has 
\begin{equation} \label{massfromdataj} M_{kl} = \frac{1}{2} \left( F((k-l)\tau) + F((k+l)\tau) \right),\end{equation}  a direct formula from the data. 
Now, let  $$U = [ u_0, \ldots, u_{n-1} ] $$ be a row vector of the true snapshots,  so we can write \begin{equation}\label{truemass} M = \int_\Omega U^\top U. \end{equation} Consider also the background field $u^0(x,t)$, the solution to (\ref{waveeq}-\ref{waveeqid}) with $q(x)=0$, which we can assume that we know. 
Let $$U_0 = [ u_0^0, \ldots, u_{n-1}^0 ] $$
be a row vector of the background snapshots $u_k^0 = u^0(x,k\tau)$, and let $$ M_0 = \int_\Omega U_0^\top U_0 $$ be the background mass matrix. 
To compute the data generated snapshots, we first compute the unique Cholesky decompositions 
$$ M = LL^\top \ \ \ \ \ \ M_0 = L_0 L_0^\top. $$
These Cholesky decompositions were first used to solve the coefficient inverse problem in \cite{DrMaThZa}.
From these we compute the row vector of data generated internal fields 
\begin{equation} \label{datageneratedfields}  {\bf U}  := U_0 L_0^{-\top} L^\top = U_0 T \end{equation}
where $T$ is upper triangular and
\begin{equation} \label{Teq} T= L_0^{-\top} L^\top. \end{equation}
Note that the entries (columns) of  ${\bf U}$ are in the finite dimensional space $\mathcal{U}_0$ generated by the entries (columns) of $U_0$, and  
${\bf U}$ is obtainable from the data. 
The reason that the approximation $ {\bf U} $ is consistent with the true internal fields is the following characterization. 
\begin{lemma} \label{massfit}The row vector of data generated internal fields  ${\bf U} = U_0 T $ for $T$ given by (\ref{Teq})  satisfies 
\begin{equation} \label{datamass} \int_\Omega {\bf U}^\top {\bf U} = M\end{equation} 
where $M$ is the true mass matrix (\ref{truemass}).  Furthermore, $T$ is the unique upper triangular transformation of $U_0$ with positive diagonal entries which has this property. 
\end{lemma}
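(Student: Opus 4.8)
The plan is to verify the Gramian-matching identity (\ref{datamass}) by a one-line substitution, and then to extract the uniqueness claim from the uniqueness of the Cholesky factorization of a symmetric positive definite matrix.

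First I would compute $\int_\Omega {\bf U}^\top {\bf U}$ directly. Since ${\bf U} = U_0 T$ with $T$ a constant matrix, linearity of the integral gives $\int_\Omega {\bf U}^\top {\bf U} = T^\top \big( \int_\Omega U_0^\top U_0 \big) T = T^\top M_0 T$. Now substitute $M_0 = L_0 L_0^\top$ and $T = L_0^{-\top} L^\top$ (so that $T^\top = L L_0^{-1}$); the factors telescope, $T^\top M_0 T = L L_0^{-1} L_0 L_0^\top L_0^{-\top} L^\top = L L^\top = M$, which is exactly (\ref{datamass}). I would also remark here that $T$ is indeed upper triangular with positive diagonal: $L_0^{-\top}$ and $L^\top$ are both upper triangular, and the diagonal of $T$ is the entrywise product of $1/\mathrm{diag}(L_0)$ with $\mathrm{diag}(L)$, both positive since $L_0,L$ are Cholesky factors. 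So $T$ belongs to the class in which uniqueness is asserted.

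For uniqueness, suppose $\tilde T$ is any upper triangular matrix with positive diagonal entries with $\int_\Omega (U_0 \tilde T)^\top (U_0 \tilde T) = M$. The same computation yields $\tilde T^\top M_0 \tilde T = M$, i.e.\ $\tilde T^\top L_0 L_0^\top \tilde T = L L^\top$. Setting $R := L_0^\top \tilde T$, which is a product of two upper triangular matrices with positive diagonals and hence upper triangular with positive diagonal, we get $R^\top R = L L^\top = (L^\top)^\top(L^\top)$. Thus $R$ and $L^\top$ are two upper triangular, positive-diagonal square roots of the same symmetric positive definite matrix $M$; uniqueness of the Cholesky decomposition forces $R = L^\top$, hence $\tilde T = L_0^{-\top} L^\top = T$.

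I do not expect a serious obstacle here: the argument is essentially bookkeeping with triangular factorizations. The one point that genuinely needs care, and which I would state as a standing hypothesis, is that $M$ and $M_0$ are symmetric \emph{positive definite} — equivalently, that the true snapshots $u_0,\dots,u_{n-1}$ and the background snapshots $u_0^0,\dots,u_{n-1}^0$ are each linearly independent in $L^2(\Omega)$ — so that the Cholesky factors $L,L_0$ exist, are invertible, and are unique with positive diagonal; without this the statement (and the construction (\ref{datageneratedfields}) itself) is not well posed.
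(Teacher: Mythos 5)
Your proposal is correct and follows essentially the same route as the paper: the identity is the direct telescoping computation $T^\top M_0 T = L L_0^{-1} L_0 L_0^\top L_0^{-\top} L^\top = M$, and uniqueness is obtained by recognizing $L_0^\top \tilde T$ (equivalently $\tilde T^\top L_0$) as a Cholesky factor of $M$ and invoking uniqueness of the Cholesky factorization. Your explicit remarks that $T$ itself lies in the admissible class and that $M$, $M_0$ must be positive definite are sensible additions but do not change the argument.
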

\begin{proof}  The formula (\ref{datamass}) follows from direct calculation. For uniqueness, if $\tilde{T}$ is upper triangular with positive diagonal entries and  $$ \int_\Omega \tilde{T}^\top U_0^\top U_0 \tilde{T} = M,$$ then $$ \tilde{T}^\top M_0 \tilde{T} = M, $$ which means that $\tilde{T}^\top L_0 $ is a Cholesky factor of $M$, hence $\tilde{T}= T$ by uniqueness of the Cholesky factorization. \end{proof}
\begin{remark} 
We can relate ${\bf U}$ given by (\ref{datageneratedfields}) to the continuum formally as follows. If $u(x,t)$ is the internal solution for unknown $q(x)$ and $u_0(x,t)= \delta(x-t)$ is the background solution corresponding to $q_0(x)=0$ and singular pulse,  then for $x\ne t$ we have \begin{equation}\label{eq:transmute} {u(x,t)=\int_0^t {u}(t',t) u_0(x,t') d t'}.\end{equation} So, for an approximate delta initial wave, one has approximately that  \begin{equation}\label{eq:transmute2} {u(x,t)=\int_0^t \mathcal{T}(t',t) u_0(x,t') d t'}\end{equation} where kernel $\mathcal{T}(t',t)$, is close to being independent of $x$.  Its discretization in $t$ and $t'$ yields a decomposition of the snapshots of $u$ into a basis of background snapshots, and $\mathcal{T}(t',t)$ becomes a matrix, which should be triangular since it inherits the structure of $u(t',t)$.  For some special cases of piecewise constant layered media, the discrete counterpart of \eqref{eq:transmute2}  becomes exact \cite{bube1983one, BoGaMaZi}, however, its convergence in general has only been understood on an intuitive level.  This is related to the theory of transmutation operators and the Gelfand-Levitan-Marchenko theory   \cite{Kravchenko2020SomeRD,Agranovich1963TheIP,habashy1991generalized}, but the precise connections need to be explored further.  We note that the results in Section \ref{sec:generalbounds} hold for any initial wave, while the convergence in the examples in Sections \ref{sec:pwconstant}  and \ref{sec:pwlinear} requires that the initial pulse approaches a delta.   
\end{remark}
Our data generated internal fields are of the form  ${\bf U} = {U}_0 T$ where $T$ is upper triangular, so the data generated internal field can only be composed of background snapshots from previous and current time steps. Our strategy to analyze the error ${\bf U}-U$ is to compare ${\bf U}$ to the best possible approximation that is of this form. We formalize this with a definition. 
\begin{definition} \label{causal} Using the sequential basis $U_0 = [ u_0^0, \ldots, u_{n-1}^0 ] $ of background snapshots for the space $\mathcal{U}_0$, we define the {\bf admissible set} $\mathcal{V}_0^n\subset [\mathcal{U}_0]^n$ to be $$\mathcal{V}_0^n =\{  V= U_0 \tilde{T}\ |\  \tilde{T}\ \mbox{upper triangular with positive diagonal entries}\}. $$  That is, for $V= [ v_0, \ldots v_{n-1} ]$, each $v_i$ is in the subspace generated by $u_0^0, \ldots, u_i^0$, and its coefficient of $u_i^0$ is positive.
\end{definition} 
Lemma \ref{massfit} shows that if the true snapshots are in $\mathcal{V}^n_0$, ${\bf U}$ will be exact. It was already shown in \cite{BoGaMaZi} that this is the case if the medium is piecewise constant with jumps exactly coinciding with the time steps in one dimension. In this paper we extend that work by showing more generally that the error in ${\bf U}$ can be controlled by the error in the best approximation of $U$ from $\mathcal{V}^n_0$.
That is, the data generated approximations to the internal fields ${\bf U}$ can be viewed as a nonlinear (oblique) projection of the true snapshots onto the admissible set $\mathcal{V}^n_0$,

\section{A general error bound} \label{sec:generalbounds}
Suppose that some approximation $$ \hat{U} = U_0 \hat{T} $$ is in the admissible set $\mathcal{V}_0^n$ from Definition \ref{causal}. Define $\hat{M}$ to be the mass matrix of these approximations,
$$\hat{M} = \int_\Omega \hat{U}^\top\hat{U},$$ and define $\hat{L}$ to be its unique Cholesky factor.  In order to measure the error, it will be convenient to use a particular norm on $[L^2(\Omega)]^n$.  For a set of functions $V=[ v_0, \ldots, v_{n-1} ]\in [L^2(\Omega)]^n$, we define 
\begin{equation} \| V\|_2 =\left(\sum_{i=0}^{n-1} \| v_i\|_2^2\right)^{1/2} .\label{L2n} \end{equation} With this norm, we see that if $V$ were time snapshots of a continuous  $v(x,t)$ with steps $\tau$, then
$$ {\| V\|_2\over{\sqrt{n}}} \approx\| v(x,t) \|_{L^2(\Omega\times[0,n\tau])}$$
using a left hand rule in time. The following Lemma states that the error between the data generated snapshots and any other admissible approximation is the norm difference of their corresponding Cholesky factors. 

\begin{lemma} \label{froblem} Let  ${\bf U} = U_0 T $ for $T$ given by (\ref{Teq}) be the row vector of data generated internal fields. For any admissible approximation $\hat{U} = U_0 \hat{T} \in \mathcal{V}_0^n$, let $\hat{M}$ be its mass matrix and $\hat{L}$ its Cholesky factor. Then we have
\begin{equation} {\| {\bf U}- \hat{U}\|_2} =  {\| L-\hat{L}\|_F},  \end{equation} and furthermore
\begin{equation} {\| {\bf U}\|_2 = \| { U} \|_2 } =  {\| L\|_F},  \end{equation}
where $U$ is the vector of true snapshots, $L$ is the Cholesky factor of the true mass matrix $M$, and $\| \cdot \|_F$ refers to the matrix Frobenius norm.
\end{lemma}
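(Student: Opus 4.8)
The plan is to reduce both identities to trace computations and the uniqueness of the Cholesky factorization. The starting observation is that, directly from the definition \eqref{L2n}, for any $V=[v_0,\dots,v_{n-1}]\in[L^2(\Omega)]^n$ one has $\|V\|_2^2=\sum_{i}\|v_i\|_2^2=\mathrm{trace}\!\left(\int_\Omega V^\top V\right)$, since the $i$th diagonal entry of $\int_\Omega V^\top V$ is exactly $\|v_i\|_2^2$. Applying this to ${\bf U}-\hat U=U_0 T-U_0\hat T=U_0(T-\hat T)$ gives $\|{\bf U}-\hat U\|_2^2=\mathrm{trace}\!\big((T-\hat T)^\top M_0(T-\hat T)\big)$, where $M_0=\int_\Omega U_0^\top U_0$. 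Substituting the Cholesky factorization $M_0=L_0L_0^\top$ and setting $B:=L_0^\top(T-\hat T)$, this becomes $\mathrm{trace}(B^\top B)=\|B\|_F^2$.

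The key step is then to identify $L_0^\top T$ and $L_0^\top\hat T$. From \eqref{Teq} we have immediately $L_0^\top T=L_0^\top L_0^{-\top}L^\top=L^\top$. For $\hat T$, note that $\hat M=\int_\Omega\hat U^\top\hat U=\hat T^\top M_0\hat T=(L_0^\top\hat T)^\top(L_0^\top\hat T)$. Since $L_0$ is lower triangular with positive diagonal and $\hat T$ is upper triangular with positive diagonal (because $\hat U\in\mathcal V_0^n$), the product $L_0^\top\hat T$ is upper triangular with positive diagonal, so $(L_0^\top\hat T)^\top$ is a lower-triangular Cholesky factor of $\hat M$ with positive diagonal; by uniqueness of the Cholesky factorization, $(L_0^\top\hat T)^\top=\hat L$, i.e.\ $L_0^\top\hat T=\hat L^\top$. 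Hence $B=L^\top-\hat L^\top$ and, using invariance of the Frobenius norm under transposition, $\|{\bf U}-\hat U\|_2=\|L^\top-\hat L^\top\|_F=\|L-\hat L\|_F$.

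For the second identity I would use the same trace formula together with Lemma \ref{massfit}: $\|{\bf U}\|_2^2=\mathrm{trace}\!\big(\int_\Omega{\bf U}^\top{\bf U}\big)=\mathrm{trace}(M)$, while from \eqref{truemass} $\|U\|_2^2=\mathrm{trace}\!\big(\int_\Omega U^\top U\big)=\mathrm{trace}(M)$, and finally $\mathrm{trace}(M)=\mathrm{trace}(LL^\top)=\|L\|_F^2$. I do not expect a genuine obstacle here; the proof is bookkeeping with trace identities and the uniqueness of Cholesky. The only point that needs a moment's care is checking that the relevant triangular products ($L_0^\top\hat T$ and the factor of $\hat M$) have positive diagonals so that the Cholesky uniqueness statement actually applies — this is precisely where the admissibility hypothesis on $\hat T$ is used, and it is also what makes $\hat L$ well defined as "the" Cholesky factor in the first place.
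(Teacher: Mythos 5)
Your proof is correct and follows essentially the same route as the paper's: reduce $\|{\bf U}-\hat U\|_2^2$ to $\mathrm{trace}\bigl((T-\hat T)^\top M_0(T-\hat T)\bigr)$ and identify $L_0^\top T=L^\top$ and $L_0^\top\hat T=\hat L^\top$, then handle the second identity by the same trace computation via Lemma~\ref{massfit}. The only difference is that you explicitly justify $\hat T=L_0^{-\top}\hat L^\top$ through the uniqueness of the Cholesky factorization and the positivity of the diagonal of $L_0^\top\hat T$, a step the paper asserts with ``we must have''; this is added care, not a different argument.
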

\begin{proof}
We can calculate 
\begin{eqnarray} \int_\Omega ( {\bf U}- \hat{U})^\top ({\bf U}- \hat{U}) &=& \int_\Omega ( T-\hat{T})^\top U_0^\top U_0 (T-\hat{T}) \nonumber \\ 
&=& ( T-\hat{T})^\top M_0 (T-\hat{T}). \end{eqnarray}
Recalling that $T= L_0^{-\top}L^\top $, we note also that we must have  $\hat{T} = L_0^{-\top} \hat{L}^\top$, and this yeilds  
\begin{equation} \int_\Omega ( {\bf U}- \hat{U})^\top ({\bf U}- \hat{U}) = (L-\hat{L})(L-\hat{L})^\top. \label{errormassform}\end{equation}
From taking the trace of both sides of (\ref{errormassform}) we get that 
\begin{eqnarray} \mbox{trace}\left( \int_\Omega ( {\bf U}- \hat{U})^\top ({\bf U}- \hat{U}) \right) &=& \mbox{trace}\left( (L-\hat{L})(L-\hat{L})^\top \right) \nonumber \\ &=& \| L-\hat{L}\|^2_F \nonumber \end{eqnarray} 
by the definition of Frobenius norm. 
The diagonal of the above matrix contains the squares of the $L^2$ function errors, so the above says that 
\begin{equation} \sum_{i=0}^{n-1} \| {\bf u}_i -\hat{u}_i \|_2^2 = \| L-\hat{L}\|_F^2. \end{equation}
Also, using Lemma \ref{massfit}, we see that \begin{equation} \sum_{i=0}^{n-1} \| {\bf u}_i\|_2^2 =\sum_{i=0}^{n-1} \| u_i\|_2^2 = \mbox{trace}( M ) =\| L\|^2_F.\end{equation}\end{proof}
We will need the following Theorem on the forward stability of Cholesky factorization, see for example \cite{ChPaSt}. 
\begin{theorem} \label{Llem}(Stewart, Sun) Let $L$ be the Cholesky factor of $M$ and $\hat{L}$ be the Cholesky factor of $\hat{M}$. Let  $\kappa_2( M )$ be  the condition number of $M$, $$\kappa_2(M)= \| M\|_2\cdot \| M^{-1}\|_2 .$$  Let 
\begin{equation} \epsilon = \| M - \hat{M}\|_F /\| M\|_2 .\end{equation}  Then if $\epsilon \kappa_2(M) < 1$, we have 
$$ \| L-\hat{L} \|_F \leq {1\over{\sqrt{2}}}\| L\|_2\kappa_2( M ) \epsilon +O(\epsilon^2).$$
\end{theorem}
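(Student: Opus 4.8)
This is a classical forward–perturbation bound for the Cholesky factorization, so the plan follows the standard linearization argument (as in \cite{ChPaSt}). Write $\hat L = L + \Delta L$ with $\Delta L$ lower triangular, and $\hat M = M + \Delta M$ with $\Delta M$ symmetric. Expanding $\hat M = \hat L\hat L^\top$ gives the exact identity
\begin{equation} \Delta M = L\,\Delta L^\top + \Delta L\, L^\top + \Delta L\,\Delta L^\top. \label{eq:cholpert}\end{equation}
First I would note that the hypothesis $\epsilon\,\kappa_2(M)<1$ forces $\|M^{-1}\Delta M\|_2 \le \|M^{-1}\|_2\|\Delta M\|_F = \epsilon\,\kappa_2(M) < 1$, so $M + t\,\Delta M$ is invertible for every $t\in[0,1]$; since these matrices are symmetric and their eigenvalues vary continuously and cannot vanish along the segment, $\hat M$ is symmetric positive definite and $\hat L$ exists and is unique.

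Next I would solve the linearized version of \eqref{eq:cholpert}. Dropping the quadratic term and conjugating by $L^{-1}$ on the left and $L^{-\top}$ on the right,
\begin{equation} L^{-1}\Delta M\, L^{-\top} \;=\; X + X^\top + E, \qquad X := L^{-1}\Delta L, \label{eq:linpart}\end{equation}
where $E$ gathers the terms quadratic in $\Delta L$ and $X$ is lower triangular. For symmetric $S$ there is a unique lower triangular $X$ with $S = X + X^\top$, namely $X = \Phi(S)$, where $\Phi$ keeps the strictly lower part of $S$ and halves its diagonal; hence, to first order, $\Delta L = L\,\Phi\!\left(L^{-1}\Delta M\,L^{-\top}\right)$. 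The one genuinely quantitative ingredient is the estimate $\|\Phi(S)\|_F \le \frac{1}{\sqrt2}\|S\|_F$: writing $S = D + N + N^\top$ with $D$ diagonal and $N$ strictly lower triangular gives $\Phi(S) = \frac12 D + N$, so $\|\Phi(S)\|_F^2 = \frac14\|D\|_F^2 + \|N\|_F^2 \le \frac12\left(\|D\|_F^2 + 2\|N\|_F^2\right) = \frac12\|S\|_F^2$. Using this together with $\|L^{-1}\Delta M\,L^{-\top}\|_F \le \|L^{-1}\|_2^2\|\Delta M\|_F = \|M^{-1}\|_2\|\Delta M\|_F$ and $\|M^{-1}\|_2\|\Delta M\|_F = \kappa_2(M)\,\epsilon$ yields the leading term $\|\Delta L\|_F \le \frac{1}{\sqrt2}\|L\|_2\,\kappa_2(M)\,\epsilon$.

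Finally, to make the remainder rigorous rather than heuristic, I would either invoke that the Cholesky map $M\mapsto L$ is real-analytic on the open cone of symmetric positive definite matrices --- so \eqref{eq:cholpert} determines $\Delta L$ as an analytic function of $\Delta M$ near $0$ whose differential at $0$ is exactly $\Delta M\mapsto L\,\Phi(L^{-1}\Delta M\,L^{-\top})$, making the Taylor remainder $O(\|\Delta M\|^2)=O(\epsilon^2)$ --- or, working directly, run a contraction-mapping argument: under $\epsilon\,\kappa_2(M)<1$ the map $\Delta L \mapsto L\,\Phi\!\left(L^{-1}(\Delta M - \Delta L\,\Delta L^\top)\,L^{-\top}\right)$ is a contraction on a small Frobenius ball, and its unique fixed point is $\Delta L$, giving existence and the stated estimate simultaneously. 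The main obstacle, and the reason for simply citing \cite{ChPaSt}, is the bookkeeping of the quadratic term $\Delta L\,\Delta L^\top$ through the contraction if one insists on fully explicit constants in place of the $O(\epsilon^2)$ bound.
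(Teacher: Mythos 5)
The paper does not prove this theorem; it is quoted as a known result with a pointer to \cite{ChPaSt}, so there is no in-paper argument to compare against. Your reconstruction is the standard (and correct) linearization proof that underlies that citation: the exact identity $\Delta M = L\,\Delta L^\top + \Delta L\,L^\top + \Delta L\,\Delta L^\top$, conjugation by $L^{-1}$ and $L^{-\top}$ to reduce to $S = X + X^\top + XX^\top$ with $X = L^{-1}\Delta L$ lower triangular, the "lower-triangular half" operator $\Phi$ with the sharp bound $\|\Phi(S)\|_F \le \tfrac{1}{\sqrt2}\|S\|_F$, and the chain $\|L\,\Phi(L^{-1}\Delta M L^{-\top})\|_F \le \tfrac{1}{\sqrt2}\|L\|_2\,\|M^{-1}\|_2\,\|\Delta M\|_F = \tfrac{1}{\sqrt2}\|L\|_2\,\kappa_2(M)\,\epsilon$, which is exactly the stated leading term. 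Your handling of the two side issues is also sound: positive definiteness of $\hat M$ along the segment $M + t\,\Delta M$ follows from $\|M^{-1}\|_2\|\Delta M\|_F = \epsilon\,\kappa_2(M) < 1$ together with continuity of eigenvalues, and the $O(\epsilon^2)$ remainder is legitimately dispatched either by analyticity of the Cholesky map on the SPD cone or by the fixed-point formulation $X = \Phi(S - XX^\top)$ (whose fixed point, after checking the diagonal of $L+\Delta L$ stays positive, must be $\hat L - L$ by uniqueness of the Cholesky factor). I see no gap; this is a faithful proof of the cited perturbation bound.
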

We can combine this Theorem with Lemma \ref{froblem} to relate the difference between the data generated internal solutions and any admissible approximation to $U$ to the difference in their mass matrices. This also shows general stability for the data generated fields with respect to purturbations in the mass matrix. 
\begin{proposition} \label{firstprop} Let $U= [u_0, \ldots, u_{n-1} ] $ be the vector of true internal snapshots, $M$ be the true mass matrix (\ref{massmatrixj}), and let  $${\bf U} = U_0 T = [ {\bf u}_0, \ldots, {\bf u}_{n-1} ]$$ for $T$ given by (\ref{Teq}) be the row vector of data generated internal fields. If  $\hat{U}= [ \hat{u}_0,\ldots,\hat{u}_{n-1}]$  is any admissible approximation from $[\mathcal{U}_0]^n$ to the true snapshots $U$, and $\hat{M}$ is their corresponding mass matrix, then for \begin{equation} \epsilon = \kappa_2(M){ \| M-\hat{M}\|_F \over{ \| M\|_2}} \label{epseq}\end{equation} small enough, 
\begin{equation} \| {\bf U}- \hat{U} \|_2 \leq \epsilon \| U \|_2  . \end{equation} 
\end{proposition}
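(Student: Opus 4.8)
The plan is to chain Lemma~\ref{froblem} together with Theorem~\ref{Llem}. Since $\hat U = U_0\hat T$ is admissible, $\hat T$ is upper triangular with positive (hence nonzero) diagonal and is therefore invertible; consequently $\hat U$ has linearly independent columns, $\hat M$ is symmetric positive definite, and its Cholesky factor $\hat L$ exists and is unique, so Lemma~\ref{froblem} applies. That lemma supplies the two exact identities $\|{\bf U}-\hat U\|_2 = \|L-\hat L\|_F$ and $\|U\|_2 = \|L\|_F$, which reduces the claim to an estimate of $\|L-\hat L\|_F$ in terms of $\|L\|_F$ and $\epsilon$.

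Next I would introduce $\epsilon' := \|M-\hat M\|_F/\|M\|_2$, so that $\epsilon = \kappa_2(M)\,\epsilon'$ and the smallness hypothesis $\epsilon'\kappa_2(M)<1$ of Theorem~\ref{Llem} is precisely $\epsilon<1$. Assuming $\epsilon$ is small enough (in particular $\epsilon<1$), Theorem~\ref{Llem} gives $\|L-\hat L\|_F \le \tfrac{1}{\sqrt2}\|L\|_2\kappa_2(M)\epsilon' + O(\epsilon'^2) = \tfrac{1}{\sqrt2}\|L\|_2\,\epsilon + O(\epsilon^2)$. Bounding $\|L\|_2 \le \|L\|_F = \|U\|_2$ and using that the leading constant $1/\sqrt2$ is strictly below $1$, one can take $\epsilon$ small enough that the entire right-hand side is at most $\epsilon\|U\|_2$; combined with the identity from Lemma~\ref{froblem}, this yields $\|{\bf U}-\hat U\|_2 \le \epsilon\|U\|_2$, as claimed.

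I do not expect a genuine obstacle here; the one delicate point is the bookkeeping of the $O(\epsilon^2)$ remainder from Theorem~\ref{Llem}. One needs to confirm that this remainder scales with a quantity controlled by $\|U\|_2$ (equivalently, to allow the threshold on $\epsilon$ to depend on $M$), so that the slack between $1/\sqrt2$ and $1$ genuinely absorbs it; apart from that the argument is a direct substitution.
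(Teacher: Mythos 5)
Your proposal is correct and follows essentially the same route as the paper: combine Lemma~\ref{froblem} with Theorem~\ref{Llem}, absorb the $O(\epsilon^2)$ remainder into the slack between the constant $1/\sqrt{2}$ and $1$ for $\epsilon$ small enough, and finish with $\|L\|_2 \le \|L\|_F = \|U\|_2$. The only addition is your explicit check that admissibility makes $\hat M$ positive definite so that $\hat L$ exists, which the paper leaves implicit.
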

\begin{proof}
 Combining Lemma \ref{froblem} with Theorem \ref{Llem}, we have 
  \begin{equation}   \| {\bf U}- \hat{U}\|_2\
\leq \kappa_2(M)\| L\|_2   { \| M - \hat{M}\|_F \over {\| M\|_2 }},\end{equation}
if we assume that   $\epsilon <1$ is small enough. Since $\| L\|_2\leq  \| L\|_F  =\| U\|_2$ , the result follows.  \end{proof}
Observe that the admissible space $\mathcal{V}_0^n$ is a subset of the subspace of $[\mathcal{U}_0]^n$ containing upper triangular transformations of $U_0$. So, if the orthogonal projection of the true snapshots onto this subspace has positive diagonal entries (which physically we expect), it will be admissible, and this will be the best admissible transformation.  In this next Lemma, we find a bound for the error in the mass matrix for this projection. 
\begin{lemma} \label{masslem}
Let $U= [u_0, \ldots, u_{n-1} ] $ be the vector of true internal snapshots, $M$ be the true mass matrix (\ref{massmatrixj}). Define $\hat{U}= [ \hat{u}_0,\ldots,\hat{u}_{n-1}]$  to be the $L^2$ projections of the true snapshot where each $u_i$  is projected onto $\mbox{span}\{ u^0_0, \ldots ,u^0_i\}$, and let $\hat{M}$ be their mass matrix. Then
\begin{equation} \| M -\hat{M}\|_F \leq \| U - \hat{U}\|^2_2  + \| R\|_F\end{equation}
where $$ R_{ij} = \begin{cases}\int_\Omega (u_i-\hat{u}_i)\hat{u}_j & i <j \\ \int_\Omega \hat{u}_i(u_j-\hat{u}_j) & i > j \\ 0 & i=j \end{cases}$$
\end{lemma}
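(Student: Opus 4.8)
The plan is to expand the mass-matrix entries in terms of the projection errors $e_i := u_i - \hat{u}_i$ and exploit the orthogonality built into the sequential definition of $\hat{U}$. Writing $u_i = \hat{u}_i + e_i$ and substituting into $M_{ij} = \int_\Omega u_i u_j$, I would obtain
\[ M_{ij} - \hat{M}_{ij} = \int_\Omega \hat{u}_i e_j + \int_\Omega e_i \hat{u}_j + \int_\Omega e_i e_j. \]
The key observation is that, by construction, $e_i$ is orthogonal to $\mathrm{span}\{u_0^0,\dots,u_i^0\}$, and since the $\hat{u}_k$ form a causal (sequential) family we have $\hat{u}_j \in \mathrm{span}\{u_0^0,\dots,u_i^0\}$ whenever $j \le i$. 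Hence for $i > j$ the term $\int_\Omega e_i \hat{u}_j$ vanishes, for $i < j$ the term $\int_\Omega \hat{u}_i e_j$ vanishes, and for $i = j$ both vanish. What remains is precisely $M_{ij} - \hat{M}_{ij} = R_{ij} + E_{ij}$, where $E_{ij} := \int_\Omega e_i e_j$ and $R$ is the matrix in the statement.

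It then remains to bound $\|E\|_F$. Note that $E = \int_\Omega (U-\hat{U})^\top(U-\hat{U})$ is the Gram matrix of the error functions $e_0,\dots,e_{n-1}$, so it is symmetric positive semidefinite with $\mathrm{trace}(E) = \sum_{i=0}^{n-1}\|e_i\|_2^2 = \|U-\hat{U}\|_2^2$. For any symmetric positive semidefinite matrix the Frobenius norm is bounded by the trace, since $\sqrt{\sum_k \lambda_k^2} \le \sum_k \lambda_k$ for nonnegative eigenvalues $\lambda_k$; therefore $\|E\|_F \le \|U-\hat{U}\|_2^2$. Applying the triangle inequality $\|M-\hat{M}\|_F \le \|R\|_F + \|E\|_F$ then yields the claimed bound.

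I do not anticipate a genuine obstacle here: once the orthogonality relations are set up the argument is essentially bookkeeping. The two points that require a little care are matching the index ranges in the definition of $R$ against which cross term actually survives in each case, and using the (standard but easy-to-overlook) fact that the Frobenius norm of a positive semidefinite matrix is controlled by its trace, rather than attempting a cruder spectral-norm estimate that would lose a factor of $n$.
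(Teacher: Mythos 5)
Your proof is correct and follows essentially the same route as the paper: both expand $M_{ij}-\hat{M}_{ij}$ using the sequential orthogonality of the projection errors to arrive at the identity $M-\hat{M}=E+R$ with $E_{ij}=\int_\Omega(u_i-\hat{u}_i)(u_j-\hat{u}_j)$. The only (immaterial) difference is in bounding $\|E\|_F$: you use the trace bound for positive semidefinite Gram matrices, while the paper applies Cauchy--Schwarz entrywise and identifies the resulting bound as the rank-one outer product $v^\top v$ with $\|v^\top v\|_F=\|v\|^2=\|U-\hat{U}\|_2^2$ --- both give the same constant-free estimate.
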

\begin{proof} Consider 
\begin{eqnarray} (M-\hat{M})_{ij} &=& \int_\Omega u_iu_j - \int_\Omega \hat{u}_i\hat{u}_j\nonumber \\
&=& \int_\Omega u_i(u_j-\hat{u}_j) + \int_\Omega (u_i-\hat{u}_i)\hat{u}_j,\nonumber\\
&=& \int_\Omega (u_i-\hat{u}_i)(u_j-\hat{u}_j)  + R_{ij} \nonumber \end{eqnarray}
where we have used that the $u_i-\hat{u}_i$  are orthogonal to $\hat{u}_j$ for  $j\leq i$.
So, 
$$ | (M-\hat{M})_{ij} | \leq \| u_i- \hat{u}_i\|_2 \| u_j -\hat{u}_j\|_2 + | R_{ij}|$$
where the first terms on the right hand side are the entries of an outer product and therefore 
\begin{eqnarray} \| M-\hat{M} \|_F &\leq&  \|  v^\top v \|_F  +\| R\|_F \nonumber \\
&\leq&  \|  v\|^2   +\| R\|_F\nonumber\end{eqnarray}
where the norm $\| \cdot\|$ denotes the vector $2$ norm and
 $$v = [ \| u_0-\hat{u}_0\|_2,\ldots, \| u_{n-1}-\hat{u}_{n-1}\|_2 ]$$  
 which proves the desired result.
 \end{proof}
The previous results combine to give us a general bound for the error in the data generated internal solutions. 
\begin{proposition} \label{mainprop}
Let $U= [u_0, \ldots, u_{n-1} ] $ be the vector of true internal snapshots, $M$ be the true mass matrix (\ref{massmatrixj}), let  $${\bf U} = U_0 T = [ {\bf u}_0, \ldots, {\bf u}_{n-1} ]$$ for $T$ given by (\ref{Teq}) be the row vector of data generated internal fields, and let  $\hat{U}= [ \hat{u}_0,\ldots,\hat{u}_{n-1}]$  to be the $L^2$ projections of the true snapshot where each $u_i$  is projected onto $\mbox{span}\{ u^0_0, \ldots ,u^0_i\}$, that is, $\hat{U}=U_0\hat{T}$ for $\hat{T}$ upper triangular. If $\hat{T}$ has positive diagonal entries, then for $\epsilon$ given by (\ref{epseq}) small enough 
\begin{equation} { \| {\bf U}- \hat{U} \|_2 \over{\sqrt{n}}} \leq{\kappa(M) \over{\| U\|_2}}\left(\| U - \hat{U}\|^2_2 + \|R\|_F\right) . \end{equation} 
and hence by the triangle inequality
\begin{equation} { \| {\bf U}- U \|_2 \over{\sqrt{n}}} \leq{\kappa(M) \over{\| U\|_2}}\left(\| U - \hat{U}\|^2_2 + \|R\|_F\right) + {\| U-\hat{U}\|_2\over{\sqrt{n}}}  \end{equation} 
where $R$ is given as in Lemma \ref{masslem}.
\end{proposition}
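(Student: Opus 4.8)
The plan is to instantiate the earlier results at the specific admissible approximation $\hat U$, while being careful with the normalization so that the factor $\sqrt n$ comes out exactly as stated. First I would record that, since $\hat T$ is upper triangular with positive diagonal entries by hypothesis, $\hat U = U_0\hat T$ lies in the admissible set $\mathcal V_0^n$ of Definition \ref{causal}, and it is precisely the snapshot-wise $L^2$ projection considered in Lemma \ref{masslem}. Consequently both Proposition \ref{firstprop} and Lemma \ref{masslem} apply to $\hat U$: Lemma \ref{masslem} supplies $\|M - \hat M\|_F \le \|U - \hat U\|_2^2 + \|R\|_F$, and Proposition \ref{firstprop} controls $\|{\bf U} - \hat U\|_2$ in terms of $\|M - \hat M\|_F$. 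Everything then reduces to assembling these with the right constants.

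The one point that requires care is that the crude estimate $\|L\|_2 \le \|L\|_F = \|U\|_2$ invoked at the very end of the proof of Proposition \ref{firstprop} is too lossy here and would produce a factor $n$ rather than $\sqrt n$. Instead I would stop the argument one step earlier, at $\|{\bf U} - \hat U\|_2 \le \kappa(M)\,\|L\|_2\,\|M - \hat M\|_F/\|M\|_2$ (valid for $\epsilon$ small, after dropping the constant $\tfrac{1}{\sqrt2}\le 1$ and absorbing the $O(\epsilon^2)$ term of Theorem \ref{Llem}), and retain the factor $\|L\|_2/\|M\|_2$. Because $M = LL^\top$ is positive semidefinite, $\|L\|_2 = \sqrt{\|M\|_2}$, so this factor equals $1/\sqrt{\|M\|_2}$; and since $\|M\|_2 = \lambda_{\max}(M) \ge \tfrac1n\,\mbox{trace}(M)$ while $\mbox{trace}(M) = \|L\|_F^2 = \|U\|_2^2$ by Lemma \ref{froblem}, we obtain $1/\sqrt{\|M\|_2} \le \sqrt n/\|U\|_2$.

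Putting the pieces together, for $\epsilon$ as in (\ref{epseq}) small enough (so that the above absorptions are legitimate), we obtain $\|{\bf U} - \hat U\|_2 \le \kappa(M)\,(\sqrt n/\|U\|_2)\,\|M - \hat M\|_F \le \kappa(M)\,(\sqrt n/\|U\|_2)\,(\|U - \hat U\|_2^2 + \|R\|_F)$, and dividing through by $\sqrt n$ gives the first displayed inequality. The second follows at once from the triangle inequality $\|{\bf U} - U\|_2 \le \|{\bf U} - \hat U\|_2 + \|\hat U - U\|_2$ upon dividing by $\sqrt n$. I do not anticipate a genuine obstacle here: the only real subtlety is the one just highlighted, namely that one must keep $\|L\|_2 = \sqrt{\|M\|_2}$ and use the largest-eigenvalue/trace inequality in order to recover $\sqrt n$ in place of $n$.
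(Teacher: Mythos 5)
Your proof is correct and follows the same skeleton as the paper's: admissibility of $\hat U$ from the sign hypothesis on $\hat T$, then Lemma \ref{masslem} to bound $\|M-\hat M\|_F$, the Cholesky perturbation bound to control $\|{\bf U}-\hat U\|_2$, and the triangle inequality for the second display. Where you diverge is in how the factor $\sqrt n$ is produced, and your version is the more careful one. The paper simply cites Proposition \ref{firstprop} and then asserts the chain $\|U\|_2^2=\|L\|_F^2=\|M\|_F\le\sqrt n\,\|M\|_2$; but $\|L\|_F^2=\mbox{trace}(M)$, which for positive semidefinite $M$ satisfies $\mbox{trace}(M)\ge\|M\|_F$ rather than equality, and the resulting claim $\mbox{trace}(M)\le\sqrt n\,\|M\|_2$ fails in general (e.g.\ $M=I$, and also in the regime of the paper's own examples, where $\mbox{trace}(M)\sim n\|M\|_2$). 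Your route --- stopping at $\|{\bf U}-\hat U\|_2\le\kappa(M)\,\|L\|_2\,\|M-\hat M\|_F/\|M\|_2$, using $\|L\|_2=\sqrt{\|M\|_2}$, and then $\|M\|_2\ge\mbox{trace}(M)/n=\|U\|_2^2/n$ --- delivers exactly the stated $\sqrt n$ normalization and effectively repairs this step. The only cosmetic caveat is that you cannot literally invoke Proposition \ref{firstprop} as a black box (its conclusion already commits to the lossy bound $\|L\|_2\le\|L\|_F$); you correctly re-enter its proof one line earlier, which is the right thing to do.
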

\begin{proof} Since $\hat{T}$ is upper triangular by definition, if it has positive diagonal entries then $\hat{U}$ is admissible, and we can apply Proposition \ref{firstprop} in addition to Lemma \ref{masslem}. We also use Lemma \ref{froblem} to get  $\| U\|_2^2= \| L\|^2_F=\| M \|_F \leq \sqrt{n} \| M\|_2 $ and obtain the first inequality. 
\end{proof}
Consider now the best $L^2$ approximation to $U$ from the full space $[\mathcal{U}_0]^n$, the orthogonal projections, which we will call $P_\tau U$. We may have that $P_\tau U$ is admissible. In this case, $P_\tau U ={\hat U}$, and the term $R$ in the above theorem is zero due to the full orthogonality. 
\begin{corollary}\label{projcor}
    Let $U= [u_0, \ldots, u_{n-1} ] $ be the vector of true internal snapshots, $M$ be the true mass matrix (\ref{massmatrixj}), let  $${\bf U} = U_0 T = [ {\bf u}_0, \ldots, {\bf u}_{n-1} ]$$ for $T$ given by (\ref{Teq}) be the row vector of data generated internal fields, and let  $P_\tau{U}$ be the $L^2$ projections of the true snapshot onto the full space $[\mathcal{U}_0]^n$. If $P_\tau U\in \mathcal{V}_0^n$ (i.e. it is in the admissible set), then for $\epsilon$ given by (\ref{epseq}) small enough 
\begin{equation} { \| {\bf U}- P_\tau{U} \|_2 \over{\sqrt{n}}} \leq{\kappa(M) \over{\| U\|_2}}\| U - P_\tau{U}\|^2_2 . \end{equation} 
\end{corollary}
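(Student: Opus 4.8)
\emph{Proof plan.} The plan is to obtain the Corollary as an essentially immediate specialization of Proposition~\ref{mainprop}: under the standing hypothesis $P_\tau U \in \mathcal{V}_0^n$, I will show that the ``acausal'' best approximation $P_\tau U$ coincides with the sequential (``causal'') projection $\hat U$ that appears in Proposition~\ref{mainprop}, and that the cross-term matrix $R$ of Lemma~\ref{masslem} then vanishes identically, so that the first displayed inequality of Proposition~\ref{mainprop} reduces to exactly the asserted bound.

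First I would exploit the structural constraint. Membership $P_\tau U \in \mathcal{V}_0^n$ means $P_\tau U = U_0 \tilde{T}$ for some upper triangular $\tilde{T}$ with positive diagonal entries; since $u_0^0,\ldots,u_{n-1}^0$ are linearly independent (as required for $M_0$ to be positive definite and hence for the Cholesky factor $L_0$ with positive diagonal to exist), this representation is unique. In particular each column $P u_i$ of $P_\tau U$ lies in $\operatorname{span}\{u_0^0,\ldots,u_i^0\}$. But $P u_i$ is by definition the $L^2$-closest element of $u_i$ in the \emph{full} space $\mathcal{U}_0 = \operatorname{span}\{u_0^0,\ldots,u_{n-1}^0\}$; by the elementary fact about nested orthogonal projections --- if the minimizer over a subspace already lies in a smaller subspace, it is a fortiori the minimizer over that smaller subspace --- we conclude $P u_i = \hat u_i$ for every $i$. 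Hence $P_\tau U = \hat U$, and the corresponding matrix $\hat T = \tilde T$ is upper triangular with positive diagonal, so the hypotheses of Proposition~\ref{mainprop} are satisfied.

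Next I would show $R \equiv 0$. By the definition in Lemma~\ref{masslem}, for $i < j$ one has $R_{ij} = \int_\Omega (u_i - \hat u_i)\hat u_j$; here $\hat u_j = P u_j \in \mathcal{U}_0$, while $u_i - \hat u_i = u_i - P u_i$ is $L^2$-orthogonal to all of $\mathcal{U}_0$ because $P$ is the orthogonal projection onto the full space $\mathcal{U}_0$. The same argument disposes of the case $i > j$, and the diagonal vanishes by definition, so $\|R\|_F = 0$. Substituting $\hat U = P_\tau U$ and $\|R\|_F = 0$ into the first inequality of Proposition~\ref{mainprop} gives precisely
$$ \frac{\|\mathbf{U} - P_\tau U\|_2}{\sqrt{n}} \le \frac{\kappa(M)}{\|U\|_2}\,\|U - P_\tau U\|_2^2 ,$$
as claimed.

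There is essentially no analytic obstacle here: the entire quantitative content is inherited from Proposition~\ref{mainprop} (and ultimately from the forward stability of the Cholesky factorization, Theorem~\ref{Llem}). The only point deserving a sentence of care is the identification $P_\tau U = \hat U$ --- that the causal and acausal best approximations agree exactly when the acausal one happens to be admissible --- which is the nested-projection observation above; everything else is bookkeeping.
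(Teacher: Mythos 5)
Your proposal is correct and follows exactly the route the paper intends: the paper's own justification (given in the sentence preceding the corollary) is precisely that admissibility of $P_\tau U$ forces $P_\tau U=\hat U$ and that $R$ vanishes by full orthogonality, after which Proposition~\ref{mainprop} gives the bound. Your write-up merely makes explicit the nested-projection identification and the orthogonality argument, both of which the paper leaves implicit.
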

\begin{remark}
Corollary \ref{projcor}  says that if the full projection is in the admissible space and has small error, the data generated solutions are asymptotically close this projection. In general, the full projection will not be in $\mathcal{V}_0^n$, and the matrix $R$ accounts for it not conforming to our admissible space.  For localized initial data,  $R$ will have a banded structure with small entries thanks to causality.  
An important point is that the background snapshots are exactly in the approximation space and admissible set $\mathcal{V}_0^n
\subset[\mathcal{U}_0]^n$, so for any projection, the error will depend on the regularity of  $U-U_0$. That is, if the initial wave is an approximate delta, $U$ itself is not regular, but the most singular part of $U$ is approximated exactly. 
\end{remark}

\section{An example with a step function initial wave}\label{sec:pwconstant}
Let us consider the one dimensional case of (\ref{waveeq}-\ref{waveeqid}) where $\Omega = (0,L)\subset \mathbb{R} $ for $L$ large, 
\begin{equation}\label{waveeq1d}
u_{tt}  - u_{xx} + q(x) u=   0 \ \ \mbox{in} \  (0,L) \times [ 0,\infty)
\end{equation}
with initial and boundary conditions
\begin{eqnarray} \label{waveeqic1d}
u ( t=0) &=& g \ \mbox{in}\ (0,L)  \\
u_t ( t=0) &=&  0\  \mbox{in}\  (0,L) \\  u_x(x=0) =u_x(x=L) &=& 0 \ \mbox{for all } \  t\in [0,\infty).\label{waveeqid1d}\end{eqnarray}
We measure the solution back at the source at the $2n-1$ evenly spaced time steps $t= k\tau$ for $k=0,\dots, 2n-2$, 
\begin{equation}\label{datadef1d} F(k\tau ) = \int_0^1 g(x) u(x,k\tau) dx .\end{equation}
We will take initial data $g$ to be a piecewise constant function source near $x=0$. Define the reference step function 
\begin{equation} H(x) = \left\{ \begin{array}{cc} 1 & -1/2\leq x \leq 1/2 \\  0 & \mbox{otherwise} \end{array} \right. , \end{equation}
 and for a given time sampling size $\tau$ let us take initial data \begin{equation} g(x) =2{H({x/\tau})\over{\tau}}.\label{initaldata} \end{equation}
Note that the scaling is chosen so that $g$ is an approximate $\delta$ pulse function, and for any $\tau$, $$\int_0^{1} g(x)dx =1.$$ The background solution at the $k$th step for $k\geq 1$ will be  $$ u^0_k = u^0(x,k\tau ) = g(x- k\tau)/2 = {H((x-k\tau)/\tau)\over{\tau}},$$
and so the background snapshots are simply piecewise constant functions on a grid with nodes $(k+1/2)\tau$ for $k=0,\ldots,n-1$, with no overlapping support. Defining $T= (n-1/2)\tau$, the space $\mathcal{U}_0$ will be the piecewise constant space on this grid with support on the spatial domain $[0, (n-1/2)\tau]$ = $[0, T]$. We assume $L > T$. 
To construct our internal solutions, we find the mass matrix 
 \begin{equation} \label{massmatrixj1d} M_{kl}= \int_\Omega u_k u_l  dx \end{equation}
for $k,l =0,\ldots,n-1$ from the data by taking
\begin{equation} \label{massfromdataj1d} M_{kl} = \frac{1}{2} \left( F((k-l)\tau) + F((k+l)\tau) \right),\end{equation}  
and the background mass matrix $$ M_0 = \int_\Omega U_0^\top U_0, $$
where $U_0= [u^0_0, \ldots, u^0_{n-1} ] $ is the vector of background solutions. 
We compute the data generated snapshots ${\bf U}$ from the Cholesky decompositions
$$ M = LL^\top \ \ \ \ \ \ M_0 = L_0 L_0^\top, $$
and we set
\begin{equation} \label{datageneratedfields1d}  {\bf U}  := U_0 (L_0^\top)^{-1} L^\top = U_0 T \end{equation}
where $T$ is upper triangular.
The following estimate is well known, see for example \cite{BrSc}.
  \begin{proposition}\label{femapprox} For any $v\in H^1([0,T])$  for $T=(n-1/2)\tau$, let $P_\tau v $ be its $L^2$ projection onto the piecewise constant space $\mathcal{U}_0= \mbox{span}\{u^0_0, \ldots u^0_{n-1}\}$. Then there exists $C$ independent of $v$ and $\tau$ such that 
 $$ \| v- P_\tau v\|_{L^2([0,T])} \leq C\tau\| v\|_{H^1([0,T])}.$$
 \end{proposition}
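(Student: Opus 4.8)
\emph{Proof proposal.} The plan is to reduce the global estimate to a one-cell estimate and then sum. First I would observe that, because the background snapshots $u^0_0,\ldots,u^0_{n-1}$ have mutually disjoint supports, the space $\mathcal{U}_0$ is precisely the space of functions on $[0,T]$ that are constant on each grid cell $I_k$ (the support of $u^0_k$, an interval of length at most $\tau$), and these cells partition $[0,T]$. Hence the $L^2$ projection decouples cell by cell: $(P_\tau v)|_{I_k}$ is the orthogonal projection of $v|_{I_k}$ onto the constants, i.e. the cell average $\bar v_k = |I_k|^{-1}\int_{I_k} v\,dx$.

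Next I would establish the local (Poincar\'e--Wirtinger) estimate: for an interval $I$ with $|I|\le\tau$ and $v\in H^1(I)$,
\[
  \|v-\bar v\|_{L^2(I)} \le |I|\,\|v'\|_{L^2(I)} \le \tau\,\|v'\|_{L^2(I)},
\]
where $\bar v$ is the average of $v$ over $I$. I would either cite this or prove it in one line by writing $v(x)-\bar v = |I|^{-1}\int_I \bigl(v(x)-v(y)\bigr)\,dy = |I|^{-1}\int_I\!\int_y^x v'(s)\,ds\,dy$, applying Cauchy--Schwarz, and integrating in $x$. A scaling argument from the reference interval $[0,1]$ makes the constant manifestly independent of $|I|$ (hence of $\tau$) and of the location of $I$, which in particular covers the possibly shorter boundary cell near $x=0$.

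Finally I would sum the squares of the local estimates over $k=0,\ldots,n-1$:
\[
  \|v-P_\tau v\|_{L^2([0,T])}^2 = \sum_{k=0}^{n-1}\|v-\bar v_k\|_{L^2(I_k)}^2
  \le \tau^2\sum_{k=0}^{n-1}\|v'\|_{L^2(I_k)}^2 = \tau^2\,\|v'\|_{L^2([0,T])}^2 \le \tau^2\,\|v\|_{H^1([0,T])}^2,
\]
and take square roots to obtain the claim with $C=1$ (or $C=1/\pi$ if the sharp Poincar\'e constant is used). There is no genuine obstacle here; this is the standard piecewise-constant finite element / midpoint-quadrature estimate, and the only two points needing a line of care are the identification of $P_\tau v$ with the cell averages (which relies on the disjointness of the supports of the $u^0_k$) and the uniformity of the Poincar\'e constant across cells, both routine.
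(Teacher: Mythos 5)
Your proof is correct, and it is essentially the standard argument: the paper does not prove this proposition at all, but simply cites it as well known from Brenner--Scott, and the cell-by-cell Poincar\'e--Wirtinger estimate combined with a scaling argument is precisely the textbook proof that citation points to. Your two points of care --- the identification of $P_\tau v$ with cell averages via the disjoint supports of the $u^0_k$, and the uniformity of the constant over the shorter boundary cell --- are exactly the right details to flag, and both are handled correctly.
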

We also will need a convenient version of a standard estimate. 
\begin{lemma}\label{waveest} Let $U= [u_0, \ldots, u_{n-1} ] $ be the vector of true internal snapshots, and let $U_0=[u^0_0,\ldots,u^0_{n-1} ]$ be the corresponding background snapshots. Then for $\tau = T/(n-1)$ small enough, there exists $C_T$ depending on $T$ but independent of $U$, $q$ and $n$ such that 
\begin{equation} \| u_i-u^0_i\|_{H^1(\Omega)} \leq C_T\| q\|_\infty { \| U \|_2\over{\sqrt{n}}}.    \end{equation}
for each $i=0,\ldots,n-1$.
\end{lemma}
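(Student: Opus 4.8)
The plan is to control the difference $u_i - u_i^0$ by writing it as the solution of a wave equation with source term $q u_i$ and zero initial data, then applying standard energy estimates. Since $u$ solves $u_{tt} - u_{xx} + qu = 0$ and $u^0$ solves $u^0_{tt} - u^0_{xx} = 0$ with the same initial and boundary data, the difference $w := u - u^0$ satisfies $w_{tt} - w_{xx} = -q u$ in $(0,L)\times[0,\infty)$ with $w(\cdot,0) = 0$, $w_t(\cdot,0) = 0$, and homogeneous Neumann boundary conditions. First I would invoke the standard energy identity for this inhomogeneous problem: multiplying by $w_t$ and integrating by parts, one obtains that the energy $E(t) = \tfrac12\|w_t(\cdot,t)\|_2^2 + \tfrac12\|w_x(\cdot,t)\|_2^2$ satisfies $E'(t) = -\int_\Omega q\, u\, w_t\, dx \le \|q\|_\infty \|u(\cdot,t)\|_2 \|w_t(\cdot,t)\|_2 \le \|q\|_\infty\|u(\cdot,t)\|_2 \sqrt{2E(t)}$, so that $\sqrt{E(t)} \le \tfrac{1}{\sqrt2}\|q\|_\infty \int_0^t \|u(\cdot,s)\|_2\,ds$. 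To turn this into an $H^1$ bound on $w$ itself I would add a Poincaré-type estimate for the Neumann problem accounting for the mean value of $w$; since the mean of $w$ is preserved in time (integrate the equation over $\Omega$, using that $q$ has compact support inside $\Omega$ and $u$ is supported away from the far boundary — actually one should just track $\frac{d}{dt}\int_\Omega w\,dx = -\int_\Omega q u\,dx$), the mean contributes a controllable term, and one concludes $\|w(\cdot,t)\|_{H^1(\Omega)} \le C(1+t)\|q\|_\infty \sup_{s\le t}\|u(\cdot,s)\|_2$ for $t \le T$.

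Next I would convert the time-integrated / supremal bound on $\|u(\cdot,s)\|_2$ into the discrete quantity $\|U\|_2/\sqrt n$ that appears in the statement. By the conservation of energy for $u$ (with potential $q\ge0$ this is only easier), $\|u(\cdot,s)\|_2$ is controlled uniformly in $s$ by the initial energy, but that would reintroduce dependence on the singular initial pulse rather than on $\|U\|_2$. The cleaner route, and the one consistent with the norm $\|U\|_2^2 = \sum_i \|u_i\|_2^2$, is to note that on each time subinterval $[k\tau,(k+1)\tau]$ the quantity $\|u(\cdot,s)\|_2$ varies by at most $O(\tau)$ times a bound on $\|u_t\|$ (finite speed of propagation plus energy conservation), so $\int_0^T \|u(\cdot,s)\|_2\,ds$ and $\sup_s\|u(\cdot,s)\|_2$ are both comparable, up to constants depending on $T$, to $\tau\sum_{k=0}^{n-1}\|u_k\|_2 \le \tau\sqrt n\,\|U\|_2 = T\,\|U\|_2/\sqrt n$, using Cauchy–Schwarz in the last step. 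Collecting constants into a single $C_T$ depending only on $T$ gives $\|u_i - u_i^0\|_{H^1(\Omega)} \le C_T\|q\|_\infty \|U\|_2/\sqrt n$ for every $i = 0,\dots,n-1$.

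The main obstacle I anticipate is the bookkeeping that makes the constant genuinely independent of $n$, $q$, and $U$ while retaining the correct $\|U\|_2/\sqrt n$ scaling on the right-hand side — in particular, justifying that comparing the continuous-time $L^1$-in-time norm of $\|u(\cdot,s)\|_2$ to the discrete sum $\tau\sum_k\|u_k\|_2$ costs only a $T$-dependent factor. This requires a uniform (in $q\ge 0$, via energy conservation) modulus-of-continuity estimate $\bigl|\,\|u(\cdot,s)\|_2 - \|u(\cdot,k\tau)\|_2\,\bigr| \le |s-k\tau|\,\|u_t\|_{L^\infty_s L^2_x}$, and then bounding $\|u_t(\cdot,s)\|_2$ by the conserved energy, which in turn must be related back to $\|U\|_2/\sqrt n$ rather than to the initial data; one can do this by reversing the argument on the very first interval or by absorbing it, since the statement only claims the bound for $\tau$ small enough. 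All of the PDE inputs are classical energy estimates; the work is purely in tracking the discrete-versus-continuous norm equivalence and the compact-support geometry so that the Neumann boundary and the mean-value term do not spoil the Poincaré step.
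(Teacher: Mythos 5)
Your proposal follows essentially the same route as the paper: write $w=u-u^0$ as the solution of the wave equation with source $-qu$ and vanishing initial data, apply the standard energy estimate to get $\sup_{t\le T}\|w\|_{H^1}\le C_T\|q\|_\infty\int_0^T\|u(\cdot,s)\|_2\,ds$, and convert the time integral to $T\|U\|_2/\sqrt n$ via a Riemann-sum comparison and Cauchy--Schwarz. The only differences are elaborations of steps the paper handles by citation (the $L^2$ part of the $H^1$ bound, via Evans) or by assertion (the left-hand-rule comparison $\int_0^T\|u\|_2\,dt\le 2\tau\sum_k\|u_k\|_2$ for $\tau$ small, whose delicacy when $u$ itself depends on $\tau$ you correctly flag), so the argument is correct and matches the paper's.
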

\begin{proof}
    If $u$ solves (\ref{waveeq})-(\ref{waveeqid}) and the background solution $u_0$ solves (\ref{waveeq})-(\ref{waveeqid}) with $q(x)=0$, then the difference $u-u_0$ satisfies
\begin{equation}\label{wavediff}
(u-u_0)_{tt}  -\Delta (u-u_0)= - q(x) u \ \ \mbox{in}  \ \Omega\times [ 0,\infty)
\end{equation}
with initial conditions
\begin{eqnarray} \label{wavediffic}
(u-u_0) ( t=0) &=& 0 \ \mbox{in}\  \Omega  \\
(u-u_0)_t ( t=0) &=&  0\  \mbox{in}\  \Omega\\ {\partial\over{\partial\nu}}(u-u_0) &=& 0 \ \mbox{on} \ \partial\Omega. \end{eqnarray}
From standard energy estimates we have (see for example Theorem of \cite{Ev}) that 
\begin{equation} \sup_{t\in [0,T]} \| u-u_0\|_{H^1(\Omega)} \leq C_T \int_0^T \| q(x) u\|_{L^2(\Omega)}dt,  \end{equation}
which implies that 
\begin{equation} \sup_{t\in [0,T]} \| u-u_0\|_{H^1(\Omega)} \leq C_T\| q\|_\infty \int_0^T \| u\|_{L^2(\Omega)}dt.  \end{equation}
Hence we have for each $i=1,\ldots, n$,
\begin{equation} \| u_i-u^0_i\|_{H^1(\Omega)} \leq C_T\| q\|_\infty \int_0^T \| u\|_{L^2(\Omega)}dt.   \end{equation}
Consider the integral in the right hand side above. Since $\| u\|_{L^2(\Omega)}$ is continuous in time, its Reimann sums, including a left hand rule, will converge. So, by adjusting with a constant, for example $2$, we have for $\tau =T/(n-1)$ small enough 
\begin{eqnarray} \int_0^T \| u\|_{L^2(\Omega)}dt &\leq &  2\tau \sum_{i=0}^{n-1} \| u_i\|_{L^2(\Omega)} \nonumber \\
&\leq& 2 \tau \sqrt{n} ( \sum_{i=0}^{n-1} \| u_i \|^2_{L^2(\Omega)} )^{1/2} \\  &\leq & {2T\over{\sqrt{n}}} \| U \|_2. \end{eqnarray}

\end{proof}
\begin{remark} \label{L1remark} We note that we also also have that 
\begin{equation} \label{linfinityest} \sup_t \| u-u_0\|_\infty \leq C_T\end{equation}
where $C_T$ is independent of $h$. Using the one dimensional Green's function for the wave equation with source $qu$, we get that $u-u_0$ can be written as an integral of $qu$, yielding
\begin{equation} \sup_t \| u-u_0\|_\infty \leq C_T \| q\|_\infty \sup_t \| u(\cdot,t)\|_{L^1(\Omega)}. \end{equation} It is well known that $$ \sup_t \| u(\cdot,t)\|_{L^1(\Omega)}\leq C_T \| u_0(\cdot,t)\|_{L^1(\Omega)}, $$ which is bounded.
\end{remark}
 We note now that since the background snapshots are in the approximation space $\mathcal{U}_0$, we have that 
 $$ u_i -P_\tau{u}_i = (u_i-u^0_i) - P_\tau (u_i-u^0_i),$$
 so we can apply Proposition \ref{femapprox} to $v=u_i-u^0_i$.
 Combining this with Lemma \ref{waveest}, we get 
 \begin{lemma}\label{approxlem} Let $U= [u_0, \ldots, u_{n-1} ] $ be the vector of true internal snapshots, and let $ P_\tau U = [P_\tau{u}_0, \ldots, P_\tau{u}_{n-1} ]$ be their $L^2$ projections onto $\mathcal{U}_0$. Then for $\tau = T/(n-1/2)$ small enough, there exists $C_T$ depending on $T$ but independent of $U$, $q$ and $n$ such that 
\begin{equation} \| u_i-P_\tau{u}_i \|_2 \leq C_T \tau \| q\|_\infty {\| U \|_2  \over{\sqrt{n}}}   \end{equation}
for $i=1,\dots,n$, and hence
\begin{equation} \| U-P_\tau{U} \|_2 \leq C_T \tau \| q\|_\infty  \| U \|_2.    \end{equation}
 \end{lemma}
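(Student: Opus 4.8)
The plan is to exploit the identity isolated just above the statement. Since every background snapshot $u^0_i$ lies in $\mathcal{U}_0$ and $P_\tau$ is the $L^2$-orthogonal projection onto $\mathcal{U}_0$, we have $P_\tau u^0_i=u^0_i$, and hence by linearity
\[ u_i-P_\tau u_i=(u_i-u^0_i)-P_\tau(u_i-u^0_i). \]
Thus the projection error of the \emph{singular} snapshot $u_i$ equals the projection error of the \emph{scattered field} $w_i:=u_i-u^0_i$. This is the whole point: $u_i$ itself is only piecewise constant (no better than $L^\infty$), but its least regular part -- the background -- is reproduced exactly by $\mathcal{U}_0$, so only the regular remainder $w_i$ contributes to the error.

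First I would record a support fact: the pulse $g$ in (\ref{initaldata}) is supported in $[0,\tau/2]$, so by finite speed of propagation for (\ref{waveeq1d}) (speed one, the Neumann condition at $x=0$ merely reflecting the already rightward-bounded support back to the right) the snapshot $u_i=u(\cdot,i\tau)$ -- and likewise $u^0_i$ and $w_i$ -- is supported in $[0,(i+1/2)\tau]\subseteq[0,(n-1/2)\tau]=[0,T]$ for every $i\leq n-1$. Consequently $\|u_i-P_\tau u_i\|_{L^2(\Omega)}=\|u_i-P_\tau u_i\|_{L^2([0,T])}$, and $P_\tau$ may be identified with the $L^2([0,T])$ projection of Proposition \ref{femapprox}. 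Applying Proposition \ref{femapprox} to $v=w_i$, which lies in $H^1([0,T])$ with finite norm by Lemma \ref{waveest}, gives
\[ \|u_i-P_\tau u_i\|_2=\|w_i-P_\tau w_i\|_{L^2([0,T])}\leq C\tau\|w_i\|_{H^1([0,T])}, \]
while Lemma \ref{waveest} bounds $\|w_i\|_{H^1(\Omega)}\leq C_T\|q\|_\infty\|U\|_2/\sqrt n$. Absorbing $C$ into $C_T$ gives the per-snapshot estimate $\|u_i-P_\tau u_i\|_2\leq C_T\tau\|q\|_\infty\|U\|_2/\sqrt n$.

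For the aggregate bound I would sum the squares over $i=0,\dots,n-1$ in the norm (\ref{L2n}):
\[ \|U-P_\tau U\|_2^2=\sum_{i=0}^{n-1}\|u_i-P_\tau u_i\|_2^2\leq n\Bigl(C_T\tau\|q\|_\infty\frac{\|U\|_2}{\sqrt n}\Bigr)^{\!2}=\bigl(C_T\tau\|q\|_\infty\bigr)^2\|U\|_2^2, \]
and a square root gives the claim. I do not expect a genuine obstacle here: both analytic ingredients are available as Proposition \ref{femapprox} and Lemma \ref{waveest}, so the remaining work is bookkeeping -- matching supports so that the projection on $\Omega$ coincides with the projection on $[0,T]$, and keeping the relation between $\tau$, $n$, and $T$ consistent (the statement uses $\tau=T/(n-1/2)$ whereas Lemma \ref{waveest} is phrased with $\tau=T/(n-1)$), which affects only the threshold implicit in ``$\tau$ small enough'' and the value of the constant $C_T$.
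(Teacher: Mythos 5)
Your proposal is correct and follows essentially the same route as the paper: the identity $u_i-P_\tau u_i=(u_i-u^0_i)-P_\tau(u_i-u^0_i)$ (stated in the paper immediately before the lemma), Proposition \ref{femapprox} applied to $v=u_i-u^0_i$, and Lemma \ref{waveest}, followed by summing in quadrature. The extra bookkeeping you supply on supports via finite speed of propagation and on the $\tau$--$n$--$T$ relation is sound and only sharpens what the paper leaves implicit.
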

 Note that for this example, the background snapshots are already orthogonal, and so $M_0$ and $L_0$ are diagonal matrices with condition number clearly bounded with respect to $\tau$. Furthermore, the sequential admissible projections,  $\hat{U}= [ \hat{u}_0,\ldots,\hat{u}_{n-1}]$, where each $u_i$  is projected onto $\mbox{span}\{ u^0_0, \ldots ,u^0_i\}$,
 are exactly the same as the orthogonal $L^2$ projections. That is,  $$\hat{U} = P_\tau U.$$ The diagonal elements of  $L_0$ are given by 
$$  (L_0)_{ii}= \| u_i^0\|_2 $$  and the diagonal elements of $\hat{L}$ are given by $$ \hat{L}_{ii} = < u_i, {u_i^0\over{\|u_i^0\|_2}}>.$$ 
 This next Lemma relates the diagonal elements (eigenvalues) of $\hat{L}$ to those of $L_0$.
 \begin{lemma} \label{alphabounds}  For $\tau$ small enough,
$$ | {\hat{L}_{ii} \over{ (L_0)_{ii}}} - 1| \leq C\sqrt{\tau} $$
for some $C$ independent of $n$, $\tau$ and $i$. 
\end{lemma}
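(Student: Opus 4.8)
The plan is to exploit the explicit piecewise-constant structure of the background snapshots to collapse the ratio $\hat L_{ii}/(L_0)_{ii}$ into a single scalar integral, and then estimate that integral with a bound on the scattered field $u_i-u^0_i$ that is uniform in $\tau$. Starting from the formulas already recorded in the paper, $(L_0)_{ii}=\|u^0_i\|_2$ and $\hat L_{ii}=\langle u_i,u^0_i/\|u^0_i\|_2\rangle$, one has
\[ \frac{\hat L_{ii}}{(L_0)_{ii}}=\frac{\langle u_i,u^0_i\rangle}{\|u^0_i\|_2^2},\qquad\text{hence}\qquad \frac{\hat L_{ii}}{(L_0)_{ii}}-1=\frac{\langle u_i-u^0_i,\,u^0_i\rangle}{\|u^0_i\|_2^2}. \]
For $i\ge 1$ the background snapshot is $u^0_i=\tau^{-1}\mathbf 1_{I_i}$ on the interval $I_i=[(i-\tfrac12)\tau,(i+\tfrac12)\tau]$ of length $\tau$, so $\|u^0_i\|_2^2=\tau^{-1}$ and $\langle u_i-u^0_i,u^0_i\rangle=\tau^{-1}\int_{I_i}(u_i-u^0_i)\,dx$; therefore the quantity to be bounded is \emph{exactly} $\int_{I_i}(u_i-u^0_i)\,dx$. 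The case $i=0$ is trivial, since the initial condition forces $u_0=u^0_0=g$ and the ratio equals $1$.

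Next I would apply Cauchy--Schwarz on the short interval $I_i$:
\[ \Bigl|\int_{I_i}(u_i-u^0_i)\,dx\Bigr|\le|I_i|^{1/2}\,\|u_i-u^0_i\|_{L^2(\Omega)}=\sqrt\tau\,\|u_i-u^0_i\|_{L^2(\Omega)}, \]
so the whole problem reduces to bounding $\|u_i-u^0_i\|_{L^2(\Omega)}$ by a constant independent of $n$, $\tau$ and $i$. (Alternatively one can estimate the integral directly by $\tau\,\|u_i-u^0_i\|_{L^\infty}$, which in fact yields the sharper rate $O(\tau)$; the route above already gives the claimed $O(\sqrt\tau)$, so I would present this simpler version.)

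The one genuine obstacle is that Lemma~\ref{waveest} is \emph{not} the right tool here: for this $L^1$-normalized approximate-delta initial wave one has $\|U\|_2/\sqrt n\sim\tau^{-1/2}$, so that estimate only gives $\|u_i-u^0_i\|_{L^2}\lesssim\tau^{-1/2}$, and combined with the $\sqrt\tau$ above this is merely $O(1)$. Instead I would use Remark~\ref{L1remark}, which provides precisely the two $\tau$-uniform bounds needed, $\sup_t\|u(\cdot,t)\|_{L^1(\Omega)}\le C_T$ and $\sup_t\|u(\cdot,t)-u^0(\cdot,t)\|_{L^\infty(\Omega)}\le C_T\|q\|_\infty$. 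Then
\[ \|u_i-u^0_i\|_{L^2(\Omega)}^2\le\|u_i-u^0_i\|_{L^\infty(\Omega)}\,\|u_i-u^0_i\|_{L^1(\Omega)}\le C_T\|q\|_\infty\bigl(\|u_i\|_{L^1}+\|u^0_i\|_{L^1}\bigr)\le C_T'\|q\|_\infty, \]
using $\|u^0_i\|_{L^1}=1$. Plugging this into the Cauchy--Schwarz estimate gives $|\hat L_{ii}/(L_0)_{ii}-1|\le C\sqrt\tau$ with $C$ depending only on $T$ and $\|q\|_\infty$, uniformly in $n$, $\tau$ and $i$; the ``$\tau$ small enough'' hypothesis is needed only to keep the finite-speed-of-propagation constants in Remark~\ref{L1remark} uniform and to guarantee $\hat L_{ii}>0$ (so that $\hat L$ is a genuine Cholesky factor), which holds since $\hat L_{ii}/(L_0)_{ii}\to 1$.
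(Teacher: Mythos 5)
Your proposal is correct and follows essentially the same route as the paper: rewrite $\hat L_{ii}/(L_0)_{ii}-1$ as $\langle u_i-u^0_i,u^0_i\rangle/\|u^0_i\|_2^2$, apply Cauchy--Schwarz, and absorb the $\sqrt\tau$ from $\|u^0_i\|_2=O(\tau^{-1/2})$ using the $\tau$-uniform bound on $u-u^0$ from Remark~\ref{L1remark}. The only cosmetic differences are that you localize to $I_i$ and pass from $L^\infty$ to $L^2$ via interpolation with the $L^1$ bound rather than via the bounded support, and your side observation that bounding $\int_{I_i}(u_i-u^0_i)$ by $\tau\|u_i-u^0_i\|_\infty$ would sharpen the rate to $O(\tau)$ is a valid bonus.
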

\begin{proof}
We write  $$|{\hat{L}_{ii}\over{(L_0)_{ii}}} -1 | = | {< u_i -u^0_i, u^0_i> \over{\| u^0_i\|_2^2}}| \leq {\| u_i -u^0_i\|_2\over{\| u_i^0\|_2}} $$  from Cauchy-Schwartz. The result follows from (\ref{linfinityest})  since $\| u^0_i\|_2 = O(1/\sqrt{\tau})$.
\end{proof}
\begin{lemma}\label{boundedmass} The mass matrix $M$ given by (\ref{massmatrixj1d}) satisfies 
$$\kappa(M)= \| M\|_2\| M^{-1}\|_2 \leq C$$
for some $C$ independent of $\tau$  (and $n$). 
\end{lemma}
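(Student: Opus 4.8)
The plan is to show separately that $\|M\|_2 = O(\tau^{-1})$ and $\lambda_{\min}(M) = \Omega(\tau^{-1})$, with constants depending only on $T$ and $\|q\|_\infty$. (This is enough: for $\tau$ in any fixed range bounded away from $0$ only finitely many integers $n$ occur, and for each of those $M$ is a fixed positive definite matrix with finite condition number.) Write $w_i = u_i - u^0_i$. Two facts about $w_i$ will do all the work. First, by finite speed of propagation, together with $L > T$ so that the boundary $x=L$ is invisible for the snapshots at times $k\tau \le T$, the function $w_i$ vanishes for $x > (i+\tfrac12)\tau$; in particular $w_i$ is supported in $[0,T]$. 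Second, by (\ref{linfinityest}) we have $\|w_i\|_{L^\infty} \le C_T$, hence also $\|w_i\|_{L^2(\Omega)} \le \sqrt{T}\,C_T$. Recall too that the $u^0_i$ are piecewise constant on the disjoint cells $I_i$ with $\|u^0_i\|_{L^2}^2 \ge 1/\tau$, so $M_0$ is diagonal with $\lambda_{\min}(M_0) = 1/\tau$ and $\|M_0\|_2 = 2/\tau$.

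For the upper bound, for a unit coefficient vector $x$ split $\sum_i x_i u_i = \sum_i x_i u^0_i + \sum_i x_i w_i$. By disjointness of the $I_i$ the first term has squared $L^2$ norm $\sum_i x_i^2 \|u^0_i\|_{L^2}^2 \le 2/\tau$, and a pointwise Cauchy--Schwarz inequality gives the second a squared $L^2$ norm at most $\sum_i \|w_i\|_{L^2}^2 \le n T C_T^2 \le 2 T^2 C_T^2 /\tau$, using $n\tau \le 2T$. Hence $\|M\|_2 = \sup_{\|x\|=1}\|\sum_i x_i u_i\|_{L^2}^2 \le C_T/\tau$.

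For the lower bound, let $P_0$ denote the $L^2(\Omega)$-orthogonal projection onto $\mathcal{U}_0$. Since $u^0_i \in \mathcal{U}_0$, expanding $P_0 u_i = u^0_i + P_0 w_i$ in the orthogonal basis $\{u^0_j\}$ shows $P_0 U = U_0 C^\top$ with $C = I + N$, where $N_{ij} = \langle w_i, u^0_j\rangle / \|u^0_j\|_{L^2}^2$. The support statement forces $N_{ij} = 0$ for $j > i$, so $N$ is lower triangular, and $|N_{ij}| = |\int_{I_j} w_i| \le \tau \|w_i\|_{L^\infty} \le C_T \tau$. Since $\|\sum_i x_i u_i\|_{L^2}^2 \ge \|\sum_i x_i P_0 u_i\|_{L^2}^2$ for every $x$, the matrix $M - C M_0 C^\top$ is positive semidefinite, so $\lambda_{\min}(M) \ge \lambda_{\min}(C M_0 C^\top) \ge \lambda_{\min}(M_0)\,\sigma_{\min}(C)^2 = \tau^{-1}\|C^{-1}\|_2^{-2}$. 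It therefore suffices to bound $\|C^{-1}\|_2$ uniformly in $\tau$ and $n$.

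The uniform invertibility of $C = I+N$ is the crux, and the only delicate point. The naive estimates fail: $N$ has $O(\tau)$ entries but roughly $n$ nonzero entries per row, so $\|N\|_2$ is only bounded (not small) and the Neumann series for $C^{-1}$ need not converge geometrically, while $\|C^{-1}\|_F = O(\sqrt n)$ is useless. What rescues the argument is the Volterra (causal) structure: $N$ is lower triangular. Solving $Cy=b$ by forward substitution gives $y_i(1+N_{ii}) = b_i - \sum_{j<i} N_{ij} y_j$, and for $\tau$ small enough that $C_T\tau \le \tfrac12$, $|y_i| \le 2\|b\|_\infty + 2 C_T\tau \sum_{j<i} |y_j|$; the discrete Gr\"onwall inequality then yields $|y_i| \le 2\|b\|_\infty (1+2C_T\tau)^i \le 2 e^{2C_T T}\|b\|_\infty$ since $i\tau \le T$. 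Thus $\|C^{-1}\|_{\ell^\infty \to \ell^\infty} \le 2 e^{2C_T T}$; running the same recursion on the partial sums of the entries of a fixed column of the lower triangular matrix $C^{-1}$ gives $\|C^{-1}\|_{\ell^1 \to \ell^1} \le 2 e^{2C_T T}$, and interpolation (the Schur test) gives $\|C^{-1}\|_2 \le 2 e^{2C_T T}$. Combining with the displayed lower bound, $\lambda_{\min}(M) \ge \tau^{-1}(2 e^{2C_T T})^{-2}$, and together with $\|M\|_2 \le C_T/\tau$ this gives $\kappa(M) = \|M\|_2/\lambda_{\min}(M) \le C$ with $C$ independent of $\tau$ and $n$.
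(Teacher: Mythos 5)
Your proof is correct, but it takes a genuinely different route from the paper's. The paper reuses its existing machinery: it invokes Lemma \ref{alphabounds} to argue that $\kappa(\hat L)$, and hence $\kappa(\hat M)$, is bounded, and then transfers this to $M$ via the perturbation bound of Lemma \ref{masslem} together with $\|U-\hat U\|_2^2/n = O(\tau)$. You instead bound the Rayleigh quotient of $M$ directly: the splitting $u_i = u^0_i + w_i$ with $\|w_i\|_\infty \le C_T$ gives $\|M\|_2 = O(1/\tau)$, and writing $P_0 U = U_0(I+N)^\top$ with $N$ lower triangular (by finite propagation speed) and entrywise $O(\tau)$ gives $\lambda_{\min}(M) \ge \lambda_{\min}(M_0)/\|(I+N)^{-1}\|_2^2$. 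Your treatment of $(I+N)^{-1}$ is the substantive addition: since $N$ has $O(1/\tau)$ nonzero $O(\tau)$-entries per row, norm-smallness fails, and you correctly exploit the Volterra structure via forward substitution, discrete Gr\"onwall, and the Schur test to get a $\tau$-uniform bound $\|(I+N)^{-1}\|_2 \le 2e^{2C_T T}$. This is worth noting because the paper's corresponding step --- ``since the eigenvalues of $\hat L$ are its diagonal, we clearly have $\kappa(\hat L)$ bounded'' --- is not by itself a complete argument: for a non-normal triangular matrix the singular values are not controlled by the diagonal alone, so some control of the off-diagonal accumulation (precisely what your Gr\"onwall argument supplies, applied to $\hat L$ written as a diagonal times $I$ plus a small triangular part) is needed to make it rigorous. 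In short, the paper's proof is shorter and leans on the already-proved lemmas but elides the crux; yours is longer, self-contained, avoids the Cholesky perturbation theory entirely, and fills exactly the gap. The only loose thread in your write-up is the parenthetical disposing of $\tau$ bounded away from zero, which tacitly assumes $M$ is nonsingular for each fixed $n$; this is assumed throughout the paper (the Cholesky factor $L$ must exist), so it is not a real objection.
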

\begin{proof} From Lemma \ref{alphabounds}, since the eigenvalues of $\hat{L}$ are its diagonal, we clearly have $\kappa(\hat{L})$ bounded and therefore $\kappa(\hat{M})$ bounded with respect to $\tau$. Since by Lemma \ref{masslem} $$\| {M\over{n}}- {\hat{M}\over{n}}\|_2\leq{\| U-\hat{U}\|_2^2\over{n}}\leq C\tau^2\|U\|_2^2\leq C\tau $$
where $C$ is independent of $\tau$, $M/n$ is arbitrarily close to $\hat{M}/n$ in norm. Additionally $n\hat{M}^{-1}$ is bounded since $\hat{L}_{ii}$ is order $\sqrt{n}$ by Lemma \ref{alphabounds}. So ${M}$ must also have bounded condition number. 
\end{proof}
 Finally, by using Proposition \ref{mainprop}, we have the following estimate. 
  \begin{proposition} \label{convergence1d} Let $U= [u_0, \ldots, u_{n-1} ] $ be the vector of true internal snapshots, $M$ be the true mass matrix (\ref{massmatrixj}), let  ${\bf U}$  be the row vector of data generated internal fields given by (\ref{datageneratedfields1d}). 
Then for $\tau$ small enough, there exists $C_T$ depending on $T$ but independent of $U$, $q$ and $\tau$ such that 
\begin{equation} { \| {\bf U}- U \|_2\over{\sqrt{n}} } \leq  C_T\tau^2\| q\|^2_\infty\| U\|_2 + C_T{\tau}\| q\|_\infty{\| U\|_2\over{\sqrt{n}}}. \end{equation} 
\end{proposition}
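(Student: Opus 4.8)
The plan is to obtain the estimate by specializing the general error bound of Section~\ref{sec:generalbounds} to this example; essentially all the analytic work has already been done in Lemmas~\ref{approxlem}, \ref{alphabounds}, and \ref{boundedmass}, so the proof should be a short assembly. Concretely, I would apply Proposition~\ref{mainprop} with $\hat U$ the sequential admissible projection of $U$ onto $\mathcal U_0$ (each $u_i$ projected onto $\mathrm{span}\{u^0_0,\dots,u^0_i\}$).

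First I would record the special features of this example. Because $g$ is a scaled indicator of width $\tau$, the background snapshots $u^0_0,\dots,u^0_{n-1}$ have pairwise disjoint supports and hence are orthogonal; consequently the sequential projections $\hat u_i$ coincide with the components of the full $L^2$ projection $P_\tau U$, and the off-diagonal matrix $R$ of Lemma~\ref{masslem} is identically zero, since then $u_i-\hat u_i\perp\hat u_j$ for \emph{all} $i,j$. Next I would check the two hypotheses of Proposition~\ref{mainprop}. Admissibility of $\hat U=P_\tau U$ amounts to $\hat T$ having positive diagonal; since $\hat T_{ii}=\hat L_{ii}/(L_0)_{ii}\to 1$ by Lemma~\ref{alphabounds}, this holds for $\tau$ small. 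The smallness of $\epsilon=\kappa(M)\|M-\hat M\|_F/\|M\|_2$ follows by combining Lemma~\ref{masslem} with $R=0$ (so $\|M-\hat M\|_F\le\|U-\hat U\|_2^2$), Lemma~\ref{approxlem} ($\|U-\hat U\|_2\le C_T\tau\|q\|_\infty\|U\|_2$), the elementary bound $\|M\|_2\ge\mathrm{trace}(M)/n=\|U\|_2^2/n$, the near-equality $n\tau\approx T$, and the uniform bound $\kappa(M)\le C_T$ of Lemma~\ref{boundedmass}; this gives $\epsilon\le C_T\tau\|q\|_\infty^2\to 0$.

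With the hypotheses verified, I would invoke the second inequality of Proposition~\ref{mainprop} with $R=0$, namely
\begin{equation*}
\frac{\|{\bf U}-U\|_2}{\sqrt n}\le\frac{\kappa(M)}{\|U\|_2}\,\|U-\hat U\|_2^2+\frac{\|U-\hat U\|_2}{\sqrt n},
\end{equation*}
and then substitute $\kappa(M)\le C_T$ from Lemma~\ref{boundedmass} and $\|U-\hat U\|_2=\|U-P_\tau U\|_2\le C_T\tau\|q\|_\infty\|U\|_2$ from Lemma~\ref{approxlem} into both terms. The first term becomes $O(\tau^2\|q\|_\infty^2\|U\|_2)$ and the second becomes $O(\tau\|q\|_\infty\|U\|_2/\sqrt n)$, which after merging constants into a single $C_T$ is exactly the stated bound.

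I do not anticipate a real obstacle; the content is bookkeeping. The two points that need a moment's care are (i) justifying $R\equiv 0$, which relies on the orthogonality of the background snapshots in this example and hence, through admissibility, on the positivity established in Lemma~\ref{alphabounds}; and (ii) confirming that $\epsilon$ is small enough to activate the Cholesky forward-stability estimate (Theorem~\ref{Llem}) that underlies Proposition~\ref{mainprop} — both reduce to the quantitative bounds already in hand.
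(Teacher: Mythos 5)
Your proposal is correct and follows essentially the same route as the paper's own proof: orthogonality of the disjointly supported background snapshots forces $\hat U = P_\tau U$ and $R\equiv 0$, admissibility follows from the positivity of the diagonal of $\hat T$ via Lemma~\ref{alphabounds}, and the bound is then assembled from Proposition~\ref{mainprop}, Lemma~\ref{approxlem}, and Lemma~\ref{boundedmass}. Your explicit verification that $\epsilon = \kappa_2(M)\|M-\hat M\|_F/\|M\|_2 = O(\tau\|q\|_\infty^2)$ is a small but worthwhile addition that the paper leaves implicit.
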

\begin{proof}  Since the sequential projections $\hat{U}$ are truly orthogonal, $\hat{U}=P_\tau U$, and the $R$ in Lemma \ref{masslem} is zero.  Furthermore, from Lemma \ref{alphabounds}, we know that the diagonal entries of $\hat{L}$ must be positive for $\tau$ small enough, since the diagonals of $L_0$ are clearly positive. Hence the diagonal entries of $\hat{T}=L_0^{-\top}\hat{L}^\top$ must be positive. We now can apply Proposition \ref{mainprop} and Lemma \ref{approxlem}, which yields
\begin{equation} { \| {\bf U}- U \|_2 \over{\sqrt{n}}} \leq  C_T \tau^2\kappa(M)\| q\|^2_\infty\| U\|_2+ C_T {\tau}\| q\|_\infty{\| U \|_2\over{\sqrt{n}}}.  \end{equation} 
The result follows from Lemma \ref{boundedmass}.
\end{proof}
\begin{remark} We note that Proposition \ref{convergence1d} shows convergence of order $\sqrt{\tau}$ since $\| U \|_2$ is $O(\sqrt{n}/\sqrt{\tau})$. 
\end{remark}

\section{An example with a piecewise linear initial wave}\label{sec:pwlinear}
Let us again consider the one dimensional case of (\ref{waveeq}-\ref{waveeqid}) where $\Omega = (0,L)\subset \mathbb{R} $ for $L$ large, 
\begin{equation}\label{waveeq1d2}
u_{tt}  - u_{xx} + q(x) u=   0 \ \ \mbox{in} \  (0,L) \times [ 0,\infty)
\end{equation}
with initial and boundary conditions
\begin{eqnarray} \label{waveeqic1d2}
u ( t=0) &=& g \ \mbox{in}\ (0,L)  \\
u_t ( t=0) &=&  0\  \mbox{in}\  (0,L) \\  u_x(x=0) =u_x(x=L) &=& 0 \ \mbox{for all } \  t\in [0,\infty).\label{waveeqid1d2}\end{eqnarray}
In this section, we assume that $q$ is smooth enough so that for any $t\leq T<L$, $u - u^0 $ is in $H^2([0,L])$. We assume that $L>T$.
We again measure the solution back at the source at the $2n-1$ evenly spaced time steps $t= k\tau$ for $k=0,\dots, 2n-2$, 
\begin{equation}\label{datadef1d2} F(k\tau ) = \int_0^1 g(x) u(x,k\tau) dx .\end{equation}
In this case we will take initial data $g$ to be a continuous piecewise linear hat function source near $x=0$. Define the reference hat function 
\begin{equation} \phi(x) = \left\{ \begin{array}{cc} 1+x & -1\leq x \leq 0 \\  1- x  & 0\leq x \leq 1 \\  0 & \mbox{otherwise} \end{array} \right. , \end{equation}
 and for a given time sampling size $\tau$ let us take initial data \begin{equation} g(x) =2{\phi(x/\tau)\over{\tau}}.\label{initaldata2} \end{equation}
Again the scaling is chosen so that $g$ is an approximate $\delta$ pulse function, and for any $\tau$, $$\int_0^1 g(x)dx =1.$$ The background solution at the $k$th step for $k\geq 1$ will be  $$ u^0_k = u^0(x,k\tau ) = g(x- k\tau)/2 = {\phi((x-k\tau)/\tau)\over{\tau}},$$
and so the background snapshots are simply the standard piecewise linear finite element basis on a spatial grid with of step $\tau$. Defining $T= n\tau$, the space $\mathcal{U}_0$ will be the finite element space with support on the spatial domain $[0, n\tau]$ = $[0, T]$, zero at the right endpoint.  

To construct our internal solutions, we find the mass matrix 
 \begin{equation} \label{massmatrixj1d2} M_{kl}= \int_\Omega u_k u_l  dx \end{equation}
for $k,l =0,\ldots,n-1$ from the data by taking
\begin{equation} \label{massfromdataj1d2} M_{kl} = \frac{1}{2} \left( F((k-l)\tau) + F((k+l)\tau) \right),\end{equation}  
and the background mass matrix $$ M_0 = \int_\Omega U_0^\top U_0, $$
where $U_0= [u^0_0, \ldots, u^0_{n-1} ] $ is the vector of background solutions. 
We compute the data generated snapshots ${\bf U}$ from the Cholesky decompositions
$$ M = LL^\top \ \ \ \ \ \ M_0 = L_0 L_0^\top, $$
and we set
\begin{equation} \label{datageneratedfields1d2}  {\bf U}  := U_0 (L_0^\top)^{-1} L^\top = U_0 T \end{equation}
where $T$ is upper triangular. 
The following estimate is well known, see for example \cite{BrSc}.
  \begin{proposition}\label{femapprox2} For any $v\in H^1([0,T])$ with $v(x)=0$ for $x\geq (i+1)\tau$, $i\leq n-1$, let $\hat{v}$ be its $L^2$ projection onto the piecewise linear space $\mathcal{U}_0= \mbox{span}\{u^0_0, \ldots u^0_{i}\}$. Then there exists $C$ independent of $v$ and $\tau$ such that 
 $$ \| v- \hat{v}\|_{L^2([0,T])} \leq C\tau\| v\|_{H^1([0,T])}.$$
 \end{proposition}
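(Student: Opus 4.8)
The plan is to reduce Proposition \ref{femapprox2} to the classical Lagrange interpolation error estimate, exploiting the fact that the $L^2$ projection $\hat v$ is the \emph{best} $L^2$ approximation of $v$ from the subspace. Since in one dimension $H^1([0,T])$ embeds continuously in $C([0,T])$, the nodal interpolant $I_\tau v = \sum_{k=0}^{i} \tau\, v(k\tau)\, u^0_k$ on the grid $\{0,\tau,\ldots,i\tau\}$ is well defined. Because $v$ vanishes on $[(i+1)\tau,T]$ and every basis function $u^0_k$ for $k=0,\ldots,i$ vanishes at $(i+1)\tau$ and beyond, $I_\tau v$ coincides with $v$ on $[(i+1)\tau,T]$ and lies in $\mathcal{U}_0 = \mbox{span}\{u^0_0,\ldots,u^0_i\}$; note the endpoint function $u^0_0$ is just the restriction of a full hat to $[0,\infty)$, so it is genuinely linear on $[0,\tau]$ and poses no special difficulty. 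Hence
$$ \| v-\hat v\|_{L^2([0,T])}\ \le\ \| v - I_\tau v\|_{L^2([0,T])}. $$

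First I would estimate the interpolation error one element at a time. On a generic element $e=[k\tau,(k+1)\tau]$ with $0\le k\le i$, the function $w:=v-I_\tau v$ vanishes at both endpoints, so a Friedrichs (Poincar\'e) inequality, scaled to $e$ by a change of variables to the unit interval, gives $\| w\|_{L^2(e)}\le C\tau\| w'\|_{L^2(e)}$ with $C$ depending only on the reference interval. Since $I_\tau v$ is affine on $e$, its derivative there equals the average $\frac{1}{\tau}\int_e v'$, and because that average is precisely the $L^2(e)$-best constant approximation of $v'$, we get $\| w'\|_{L^2(e)}=\| v' - (I_\tau v)'\|_{L^2(e)}\le \| v'\|_{L^2(e)}$. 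Combining, $\| w\|_{L^2(e)}\le C\tau\| v'\|_{L^2(e)}$.

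Then I would square and sum over the elements $k=0,\ldots,i$ (the error being zero on $[(i+1)\tau,T]$):
$$ \| v-I_\tau v\|_{L^2([0,T])}^2 = \sum_{k=0}^{i}\| w\|_{L^2(e_k)}^2 \le C\tau^2\sum_{k=0}^{i}\| v'\|_{L^2(e_k)}^2 = C\tau^2\| v'\|_{L^2([0,T])}^2, $$
so that $\| v-I_\tau v\|_{L^2([0,T])}\le C\tau\| v'\|_{L^2([0,T])}\le C\tau\| v\|_{H^1([0,T])}$, and the best-approximation bound above completes the proof. The constant depends only on the reference element, hence is independent of $v$, $\tau$, $i$ and $n$.

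There is no real obstacle here; this is the standard piecewise-linear $L^2$-approximation result, and one could equally well just cite \cite{BrSc} for the interpolation estimate and invoke optimality of the $L^2$ projection. The only points worth a line of care are (i) checking that $I_\tau v\in\mathcal{U}_0$, which uses both the vanishing of $v$ past $(i+1)\tau$ and the fact that the left-endpoint basis function is affine on its support, and (ii) obtaining the correct $\tau$-scaling of the Poincar\'e constant through a reference-element scaling argument rather than a dimension-dependent embedding constant.
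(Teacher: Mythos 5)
Your proof is correct and is the standard argument (nodal interpolation with an element-wise scaled Poincar\'e estimate, followed by the best-approximation property of the $L^2$ projection); the paper itself offers no proof here, simply citing this as a well-known estimate from Brenner--Scott. Your two points of care --- membership of $I_\tau v$ in $\mbox{span}\{u^0_0,\ldots,u^0_i\}$ using the vanishing of $v$ past $(i+1)\tau$, and the $\tau$-scaling of the constant via the reference element --- are exactly the details that make the citation applicable to this truncated basis, so there is nothing to correct.
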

Since the background snapshots are in the approximation space $\mathcal{U}_0$, we have that 
 $$ u_i -\hat{u}_i = (u_i-u^0_i) - \widehat{(u_i-u^0_i)},$$
 so we can apply Proposition \ref{femapprox2} to $v=u_i-u^0_i$, which due to causality is zero for $x\geq (i+1)\tau$.
 \begin{lemma}\label{approxlem2} Let $U= [u_0, \ldots, u_{n-1} ] $ be the vector of true internal snapshots, and let $\hat{U} = [\hat{u}_0, \ldots, \hat{u}_{n-1} ]$ be the sequential causal $L^2$ projections described above. Then for $\tau = T/n$ small enough, there exists $C_T$ depending on $T$ but independent of $U$, $q$ and $n$ such that 
\begin{equation} \| u_i-\hat{u}_i \|_2 \leq C_T \tau  \| u_i-u^0_i \|_{H^1([0,T])}    \end{equation}
for $i=1,\dots,n$, and hence
\begin{equation} \| U-\hat{U} \|_2 \leq C_T \tau \| U-U_0 \|_{[H^1([0,T])]^n}.    \end{equation}
 \end{lemma}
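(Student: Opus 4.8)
The plan is to reduce the estimate to the finite-element approximation result, Proposition~\ref{femapprox2}, applied to the scattered snapshot $v_i := u_i - u_i^0$. First, since $u_i^0$ lies in the subspace $\mathrm{span}\{u_0^0,\dots,u_i^0\}$ onto which we project, and the $L^2$-projection is linear and idempotent and fixes that subspace, we have $\hat u_i = u_i^0 + \widehat{v_i}$, so that $u_i - \hat u_i = v_i - \widehat{v_i}$ is exactly the projection error of $v_i$. This is the identity already noted just before the statement of the lemma.

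Next I would check that $v_i$ meets the hypotheses of Proposition~\ref{femapprox2}. Membership $v_i \in H^1([0,T])$ follows from the regularity assumption made in this section (indeed $u - u^0 \in H^2([0,L]) \subset H^1$ for $t \le T$), or more elementarily from the energy estimates used in Lemma~\ref{waveest}. The support (causality) condition $v_i(x) = 0$ for $x \ge (i+1)\tau$ is where the real work lies: the difference $u - u^0$ solves the wave equation with source $-q u$ and zero Cauchy data, so by finite speed of propagation (speed one) its spatial support at time $t$ lies in the $t$-neighborhood of $\mathrm{supp}\,g \subset [0,\tau]$, using $L > T$ so that the right boundary does not yet influence the solution on $[0,T]$. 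Evaluating at $t = i\tau$ gives $\mathrm{supp}\,v_i \subset [0,(i+1)\tau]$, which is exactly the condition that places the nodal interpolant of $v_i$ in $\mathrm{span}\{u_0^0,\dots,u_i^0\}$, so the proposition applies.

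With these hypotheses in hand, Proposition~\ref{femapprox2} gives $\|v_i - \widehat{v_i}\|_{L^2([0,T])} \le C\tau\,\|v_i\|_{H^1([0,T])}$ with $C$ independent of $v_i$, $i$, $q$, and $\tau$; writing this as $\|u_i - \hat u_i\|_2 \le C_T\tau\,\|u_i - u_i^0\|_{H^1([0,T])}$ gives the first displayed inequality. Squaring, summing over $i = 0,\dots,n-1$, and taking square roots then yields $\|U - \hat U\|_2 = \big(\sum_i \|u_i - \hat u_i\|_2^2\big)^{1/2} \le C_T\tau\big(\sum_i \|u_i - u_i^0\|_{H^1([0,T])}^2\big)^{1/2} = C_T\tau\,\|U - U_0\|_{[H^1([0,T])]^n}$, the second inequality.

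The main obstacle is the causality argument in the second paragraph: one must justify carefully that the reflection of the pulse off the left (Neumann) boundary at $x = 0$, where the source is essentially located, does not enlarge the support of $v_i$ to the right, and that for $\tau = T/n$ the finite-propagation-speed statement is rigorous on the bounded domain $(0,L)$ (no influence from $x = L$ on $[0,T]$ since $L > T$). Once the support and $H^1$ regularity of $v_i$ are secured, the rest is a direct application of Proposition~\ref{femapprox2} and summation.
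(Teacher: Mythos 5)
Your proposal follows exactly the paper's (very terse) argument: the paper proves this lemma only through the remark immediately preceding it, namely the identity $u_i-\hat{u}_i=(u_i-u_i^0)-\widehat{(u_i-u_i^0)}$ combined with Proposition~\ref{femapprox2} applied to $v=u_i-u_i^0$, with the support condition asserted ``due to causality.'' Your write-up is correct and simply makes explicit the finite-speed-of-propagation justification of that support claim and the final summation in $i$, both of which the paper leaves implicit.
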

 
We again will need a Lemma which relates the diagonal elements (eigenvalues) of $\hat{L}$ to those of $L_0$.
 \begin{lemma} \label{alphabounds2} Define $\bar{u}_i^0$ to be the sequentially orthonormalized background snapshots, 
then $$ \hat{L}_{ii} = < u_i, \bar{u}_i^0>, $$
and 
$$ (L_0)_{ii}= < u_i^0, \bar{u}_i^0>  $$ for $u_i,u_i^0$ the true and background snapshots, satisfy
$$ | {\hat{L}_{ii} \over{ (L_0)_{ii}}} - 1| \leq C\sqrt{\tau} $$
for some $C$ independent of $n$, $\tau$ and $i$. 
\end{lemma}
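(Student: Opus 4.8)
The plan is to follow the proof of Lemma~\ref{alphabounds}, the new feature being that the background hat functions are no longer mutually orthogonal. I would first record the two stated diagonal formulas through the identification of Cholesky factorization with sequential orthonormalization. Writing $U_0=\bar U_0 R$ with $R$ upper triangular and positive diagonal, the identity $M_0=\int_\Omega U_0^\top U_0=R^\top R$ shows that $L_0=R^\top$, hence $(L_0)_{ii}=R_{ii}=\langle u^0_i,\bar u^0_i\rangle$. Likewise, since each $\hat u_i$ is the $L^2$ projection of $u_i$ onto $\mbox{span}\{\bar u^0_0,\dots,\bar u^0_i\}$, the matrix $S:=\int_\Omega \bar U_0^\top\hat U$ is upper triangular with $S_{ji}=\langle u_i,\bar u^0_j\rangle$ and $\hat M=S^\top S$; once we know $S_{ii}>0$ (which will follow from the estimate), this is the Cholesky factorization, so $\hat L=S^\top$ and $\hat L_{ii}=\langle u_i,\bar u^0_i\rangle$.

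With these formulas, subtracting and applying Cauchy--Schwarz together with $\|\bar u^0_i\|_2=1$ gives
\[
\Big|\frac{\hat L_{ii}}{(L_0)_{ii}}-1\Big|=\Big|\frac{\langle u_i-u^0_i,\bar u^0_i\rangle}{(L_0)_{ii}}\Big|\le\frac{\|u_i-u^0_i\|_2}{(L_0)_{ii}},
\]
so it remains to bound the numerator above by a constant and the denominator below by a multiple of $\tau^{-1/2}$. For the numerator, the $L^\infty$ estimate~(\ref{linfinityest}) applies here verbatim, since the initial data $g$ still has unit $L^1$ mass and the Green's-function argument of Remark~\ref{L1remark} is unchanged; hence $\|u_i-u^0_i\|_2\le\sqrt{L}\,\|u_i-u^0_i\|_\infty\le C$ uniformly in $i$, $n$, $\tau$. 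For the denominator, I would use that $\tau M_0$ is, up to a bounded number of boundary corrections, the standard mass matrix $\tilde A$ of continuous piecewise linear finite elements on a unit mesh, which is tridiagonal with condition number bounded independently of $n$; in particular $\lambda_{\min}(\tilde A)\ge c>0$. From $\tilde A=L_{\tilde A}L_{\tilde A}^\top$ one has $L_0=\tau^{-1/2}L_{\tilde A}$, and the pivot identity $(L_{\tilde A})_{ii}^2=1/\big((\tilde A_i)^{-1}\big)_{ii}\ge\lambda_{\min}(\tilde A_i)\ge\lambda_{\min}(\tilde A)\ge c$, where $\tilde A_i$ is the leading $i\times i$ principal submatrix and the middle inequality is Cauchy interlacing, gives $(L_0)_{ii}\ge\sqrt{c/\tau}$ uniformly in $i$, $n$. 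In particular $S_{ii}\ge(L_0)_{ii}-C>0$ for $\tau$ small, justifying $\hat L=S^\top$; and combining the numerator and denominator bounds yields $|\hat L_{ii}/(L_0)_{ii}-1|\le C\sqrt\tau$.

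I expect the only step requiring real care to be the uniform lower bound $(L_0)_{ii}\ge c\tau^{-1/2}$: it is the statement that the sequential Gram--Schmidt residual norms of the hat basis do not degenerate as $n\to\infty$, which I reduce to the classical uniform well-conditioning of the finite element mass matrix combined with Cauchy interlacing, so that every partial pivot --- uniformly in $i$ --- inherits the bound. One should also confirm that the modified first and last basis functions, coming from the endpoints of $[0,T]$, alter only $O(1)$ entries of $\tilde A$ while preserving symmetry and uniform positive definiteness, so that the bound is unaffected. Everything else is a direct transcription of the proof of Lemma~\ref{alphabounds}.
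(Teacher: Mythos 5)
Your proposal is correct, and it is in fact considerably more complete than what the paper writes: the paper's entire proof of Lemma~\ref{alphabounds2} is the single sentence ``The proof is similar to Lemma~\ref{alphabounds} since again $u_i-u_i^0$ is small compared to $u_i^0$.'' The core mechanism is the same in both --- identify the diagonal Cholesky entries with Gram--Schmidt pivots, write $\hat L_{ii}-(L_0)_{ii}=\langle u_i-u_i^0,\bar u_i^0\rangle$, bound the numerator by the uniform $L^\infty$ estimate of Remark~\ref{L1remark}, and bound the denominator below by $c\,\tau^{-1/2}$. What you add, and what the paper silently assumes, is the proof that the denominator bound survives when the background snapshots overlap: in Lemma~\ref{alphabounds} one has $(L_0)_{ii}=\|u_i^0\|_2$ exactly, whereas here $(L_0)_{ii}$ is a Gram--Schmidt residual norm, and your argument via the pivot identity $(L_{\tilde A})_{ii}^2=1/\bigl((\tilde A_i^{-1})_{ii}\bigr)\ge\lambda_{\min}(\tilde A_i)\ge\lambda_{\min}(\tilde A)$ together with Cauchy interlacing and the well-known uniform conditioning of the tridiagonal hat-function mass matrix (the paper's own Lemma~\ref{boundedmass2} records that $6\tau M_0$ is $\mathrm{tridiag}(1,4,1)$) is exactly the right way to get this uniformly in $i$ and $n$. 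Your identification $\hat L=S^\top$ with $S_{ji}=\langle u_i,\bar u_j^0\rangle$, and the a posteriori positivity of $S_{ii}$ for small $\tau$, are also correct and justify the displayed formulas for the diagonal entries. The one loose end you correctly flag --- that the first (and last) basis functions are modified at the endpoints of $[0,T]$, perturbing $O(1)$ entries of $\tilde A$ --- deserves the brief verification you describe, but the modified matrix is still the Gram matrix of linearly independent piecewise linear functions with uniformly bounded local interactions, so uniform positive definiteness is routine to check; the paper glosses over this point as well.
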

\begin{proof}
The proof is similar to Lemma \ref{alphabounds} since again $u_i - u_i^0$ is small compared to $u_i^0$.
\end{proof}
We again have that $\kappa(M)$ is bounded independent of $\tau$.
\begin{lemma}\label{boundedmass2} The mass matrix $M$ given by (\ref{massmatrixj1d}) satisfies 
$$\kappa(M)= \| M\|_2\| M^{-1}\|_2 \leq C$$
for some $C$ independent of $\tau$  (and $n$). 
\end{lemma}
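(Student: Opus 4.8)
The plan is to follow the same strategy as in the proof of Lemma \ref{boundedmass}, but keeping track of the fact that the background snapshots here are no longer orthogonal. First I would observe that the orthonormalized background snapshots $\bar{u}_i^0$ are the standard $L^2$-orthonormal basis obtained from sequentially Gram--Schmidt-ing the hat-function finite element basis, scaled by $1/\tau$; the mass matrix $M_0$ of the reference finite element basis on a uniform grid of step $\tau$ is the classical tridiagonal matrix $\frac{\tau}{6}\,\mathrm{tridiag}(1,4,1)$ (up to the boundary row), whose eigenvalues are bounded above and below by $c\tau$ and $C\tau$ uniformly in $n$. Hence $\kappa_2(M_0)$, and therefore $\kappa_2(L_0)$, is bounded independently of $\tau$ and $n$, and the diagonal entries $(L_0)_{ii}=\langle u_i^0,\bar u_i^0\rangle$ are all of order $1/\sqrt\tau$.

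Next I would control $\kappa(\hat M)$ exactly as before. By Lemma \ref{alphabounds2}, the diagonal of $\hat L$ (its eigenvalues, since $\hat L$ is triangular) differs from the diagonal of $L_0$ by a relative factor $1+O(\sqrt\tau)$, so for $\tau$ small enough $\hat L_{ii}$ is bounded above and below by constant multiples of $1/\sqrt\tau$; therefore $\kappa_2(\hat L)$, and hence $\kappa_2(\hat M)$, is bounded uniformly in $\tau$ and $n$, and $n\hat M^{-1}$ is bounded. Then I would use Lemma \ref{masslem} together with Lemma \ref{approxlem2} (and the $H^1$ bound on $u_i-u_i^0$ from Lemma \ref{waveest}, which holds verbatim here) to estimate
\begin{equation}
\Big\| \frac{M}{n} - \frac{\hat M}{n}\Big\|_2 \leq \frac{\|U-\hat U\|_2^2 + \|R\|_F}{n} \leq C\tau^2 \frac{\|U-U_0\|_{[H^1]^n}^2}{n} + \frac{\|R\|_F}{n},
\end{equation}
and argue that both terms are $O(\tau)$ (or at least $o(1)$): the first because $\|u_i-u_i^0\|_{H^1}$ is $O(\|U\|_2/\sqrt n)$ so $\|U-\hat U\|_2^2/n = O(\tau^2\|U\|_2^2/n)=O(\tau)$; the $\|R\|_F/n$ term requires noting that $R_{ij}$ is an integral of $(u_i-\hat u_i)$ against $\hat u_j$, so $|R_{ij}|\le \|u_i-\hat u_i\|_2\,\|\hat u_j\|_2 = O(\tau)\cdot O(1/\sqrt\tau)\cdot \|U\|_2/\sqrt n$ entrywise, giving $\|R\|_F/n = O(\sqrt\tau)$ after summing $n^2$ such entries and dividing by $n$. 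Hence $M/n$ is arbitrarily close to $\hat M/n$ in norm as $\tau\to 0$.

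Finally, combining these: $M/n$ is a small perturbation of $\hat M/n$, which has bounded norm and bounded inverse norm uniformly in $\tau$; so by a standard perturbation argument $\|M/n\|_2$ and $\|nM^{-1}\|_2$ are bounded for $\tau$ small enough, and $\kappa(M)=\kappa(M/n)\le C$. The main obstacle I anticipate is the bookkeeping for $\|R\|_F$: unlike the step-function example, $R$ is genuinely nonzero here because the sequential causal projections $\hat U$ are not fully orthogonal, so one must verify that the off-diagonal cancellation (causality/banded structure of $R$) plus the extra $\tau$ gain from the projection estimate is enough to kill $\|R\|_F/n$. If a bound of $O(\sqrt\tau)$ on $\|R\|_F/n$ turns out to be insufficient at some later point, one would instead exploit that $R$ is banded with bandwidth independent of $n$ (by finite speed of propagation the background snapshot $u_j^0$ is supported in $[j\tau-\tau,j\tau+\tau]$ while $u_i-\hat u_i$ is essentially supported in $[0,(i+1)\tau]$, so actually only near-diagonal entries survive), which upgrades the estimate to $O(\tau)$ and closes the argument comfortably.
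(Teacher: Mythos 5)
Your proof follows the same route as the paper's: bound $\kappa(M_0)$ via the classical tridiagonal finite-element mass matrix, deduce boundedness of the diagonal of $\hat L$ (hence of $\kappa(\hat L)$ and $\kappa(\hat M)$) from Lemma \ref{alphabounds2}, and then transfer to $M$ by showing $\|M/n-\hat M/n\|_2$ is small via Lemma \ref{masslem} --- the paper compresses this last step into ``the rest of the argument follows the same as in Lemma \ref{boundedmass}.'' Your explicit handling of the $\|R\|_F/n$ term is in fact more careful than the paper's; just note that your crude entrywise bound gives $|R_{ij}|=O(1)$ and hence $\|R\|_F/n=O(1)$, not $O(\sqrt{\tau})$, so the banded-structure argument you offer as a fallback is the one actually needed, and it yields $\|R\|_F/n=O(\sqrt{\tau})$ rather than $O(\tau)$ --- still $o(1)$, which is all the perturbation argument requires.
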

\begin{proof} We first note that the background mass matrix $M_0$ is simply the mass matrix for the standard one dimensional piecewise linear basis functions on $[0,T]$ on a uniform mesh of size $\tau$, multiplied by $1/\tau^2$.  This scaling does not affect the condition number, and it is well known that $\kappa(M_0)\leq 3$.  We remark that $6\tau M_0 $ is the tridiagonal matrix with $4$ on the diagonal and $1$ on the off diagonal, so this is the scaling in which the eigenvalues are $O(1)$. This means that the condition number of $L_0$ is bounded. Due to Lemma \ref{alphabounds2}, the rest of the argument follows the same as in Lemma \ref{boundedmass}. 
\end{proof}
 Unlike the example from the previous section, the background snapshots are overlapping, and the admissible projections $\hat{U}$ are not the same as the full orthogonal projections $P_\tau U$. The full projection onto $\mathcal{U}_0$ will in general have a contribution from $u^0_{i+1}$. Indeed,
 $$ P_\tau u_i = \hat{u}_i + < u_i, \bar{u}^0_{i+1}> \bar{u}^0_{i+1}$$
 where $\{ \bar{u}_i^0\}$ are the orthonormalized background snapshots. This means that $R$ in Lemma \ref{masslem} and Proposition \ref{mainprop} will be nonzero on its super and sub diagonal. 
  \begin{proposition} \label{convergence1d2} Let $U= [u_0, \ldots, u_{n-1} ] $ be the vector of true internal snapshots, $M$ be the true mass matrix (\ref{massmatrixj}), let  ${\bf U}$  be the row vector of data generated internal fields given by (\ref{datageneratedfields1d}). 
Then for $\tau$ small enough, there exists $C$ depending on $T$ and $q$ but independent of $U$ and $\tau$ such that 
\begin{equation} {\| {\bf U}- U \|_2\over{\sqrt{n}} } \leq  C\sqrt{\tau}\left(1 + \sup_i \| u_i-\hat{u}_i\|_\infty\right). \end{equation} 
\end{proposition}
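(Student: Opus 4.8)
The plan is to apply Proposition~\ref{mainprop} with $\hat U$ the vector of the sequential \emph{causal} $L^2$ projections described just above (each $u_i$ projected onto $\mathrm{span}\{u^0_0,\dots,u^0_i\}$), and to bound each of the three quantities on its right-hand side by $C\sqrt\tau\,(1+\sup_i\|u_i-\hat u_i\|_\infty)$. First, by Lemma~\ref{alphabounds2} the diagonal entries of $\hat L$, hence of $\hat T=L_0^{-\top}\hat L^\top$, are positive for small $\tau$, so $\hat U$ is admissible and Proposition~\ref{mainprop} applies; and $\kappa(M)\le C$ by Lemma~\ref{boundedmass2}. Throughout I would use the two-sided estimate $\|U\|_2\asymp\sqrt{n/\tau}$, which follows from $\|u^0_i\|_2=O(\tau^{-1/2})$ and $\|u_i-u^0_i\|_2\le C_T$ (the $L^\infty$ bound of Remark~\ref{L1remark} together with the bounded support of $u_i-u^0_i$); recall $n\tau=T$ is fixed, so $\|U\|_2=O(1/\tau)$. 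One also checks that $\sup_i\|u_i-\hat u_i\|_\infty$ is bounded (Remark~\ref{L1remark} plus $L^\infty$-stability of the piecewise-linear $L^2$ projection on a uniform mesh), which makes $\epsilon$ in~\eqref{epseq} tend to $0$ with $\tau$, as Proposition~\ref{mainprop} requires.

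The two ``regularity'' terms are quick. Combining Lemma~\ref{approxlem2} with Lemma~\ref{waveest} gives $\|U-\hat U\|_2\le C_T\tau\|U-U_0\|_{[H^1([0,T])]^n}\le C_T\|q\|_\infty\,\tau\,\|U\|_2$. Hence the quadratic term of Proposition~\ref{mainprop} satisfies $\frac{\kappa(M)}{\|U\|_2}\|U-\hat U\|_2^2\le C\tau^2\|U\|_2=O(\tau)$ since $\|U\|_2=O(1/\tau)$, and the last term is $\frac{\|U-\hat U\|_2}{\sqrt n}\le C_T\|q\|_\infty\tau\frac{\|U\|_2}{\sqrt n}=O(\sqrt\tau)$; both are absorbed into $C\sqrt\tau(1+\sup_i\|u_i-\hat u_i\|_\infty)$.

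The heart of the matter is the term $\frac{\kappa(M)}{\|U\|_2}\|R\|_F$, where I would exploit causality. By finite speed of propagation and $\mathrm{supp}\,g\subset[-\tau,\tau]$, each snapshot $u_i$, and therefore $u_i-\hat u_i$, is supported in $[0,(i+1)\tau]$; meanwhile $u^0_k$ is supported in $[(k-1)\tau,(k+1)\tau]$, and $u_i-\hat u_i$ is $L^2$-orthogonal to $\mathrm{span}\{u^0_0,\dots,u^0_i\}$ by definition of the projection. Writing $\hat u_j=\sum_k\hat T_{kj}u^0_k$, for $i<j$ every term of $R_{ij}=\langle u_i-\hat u_i,\hat u_j\rangle$ vanishes except the one with $k=i+1$, so
\[ R_{ij}=\hat T_{i+1,j}\,\langle u_i-\hat u_i,u^0_{i+1}\rangle,\qquad |\langle u_i-\hat u_i,u^0_{i+1}\rangle|\le\|u_i-\hat u_i\|_\infty\,\|u^0_{i+1}\|_{L^1}=\|u_i-\hat u_i\|_\infty, \]
and symmetrically for $i>j$. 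Thus $\|R\|_F^2=2\sum_i\langle u_i-\hat u_i,u^0_{i+1}\rangle^2\,\|\mathrm{row}_{i+1}\hat T\|^2$. Since $\hat T^\top=\hat L L_0^{-1}$, $\|\mathrm{row}_{i+1}\hat T\|\le\|\hat L\|_2\,\|L_0^{-1}\|_2$, where $\|L_0^{-1}\|_2^2=\lambda_{\min}(M_0)^{-1}=O(\tau)$ (the $O(1)$ tridiagonal scaling $6\tau M_0$ from the proof of Lemma~\ref{boundedmass2}) and $\|\hat L\|_2^2=\lambda_{\max}(\hat M)=O(1/\tau)$ (since $\kappa(\hat M)=O(1)$ and $\mathrm{trace}\,\hat M=\|\hat U\|_2^2=O(n/\tau)$, so $\lambda_{\max}(\hat M)\le\kappa(\hat M)\,\mathrm{trace}(\hat M)/n$). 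Hence $\|\mathrm{row}_{i+1}\hat T\|=O(1)$, $\|R\|_F^2\le Cn\sup_i\|u_i-\hat u_i\|_\infty^2$, and
\[ \frac{\kappa(M)}{\|U\|_2}\|R\|_F\le C\cdot O\!\left(\sqrt{\tau/n}\right)\cdot O\!\left(\sqrt n\right)\sup_i\|u_i-\hat u_i\|_\infty=O(\sqrt\tau)\,\sup_i\|u_i-\hat u_i\|_\infty . \]
Adding the three contributions and using $n\tau=T$ yields the claimed bound.

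The main obstacle is the third paragraph: obtaining the one-term reduction of $R_{ij}$ rigorously requires careful bookkeeping of the finite-speed support of $u_i$, the supports of the finite-element hats, and the orthogonality defining the causal projections (in particular that $\hat u_i-u^0_i$ is itself the projection of $u_i-u^0_i$); once that is in hand, the row-norm bound on $\hat T$ is a routine operator-norm computation given $\kappa(M)=O(1)$ and the $O(1)$-scaled mass matrix. A secondary point to pin down is the exact statement of the finite-speed support claim (including the Neumann reflection at $x=0$), which is where the localization of the pulse $g$ is used.
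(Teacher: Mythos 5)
Your proposal is correct and follows the paper's overall strategy: verify admissibility of the causal projections via Lemma \ref{alphabounds2}, invoke Proposition \ref{mainprop} with $\kappa(M)$ bounded by Lemma \ref{boundedmass2}, estimate $\|U-\hat U\|_2$ by Lemmas \ref{approxlem2} and \ref{waveest}, and convert everything to powers of $\tau$ using $\|U\|_2/\sqrt n\ge c/\sqrt\tau$. The one substantive difference is the treatment of $R$. The paper asserts that $R$ is nonzero only on the first super- and sub-diagonals (on the grounds that $P_\tau u_i-\hat u_i$ involves only $\bar u^0_{i+1}$) and then applies H\"older's inequality, with $\|\hat u_{i+1}\|_1\le C\|u_{i+1}\|_1\le C$, to each of the $O(n)$ nonzero entries. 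You instead expand $\hat u_j=\sum_k\hat T_{kj}u^0_k$ and show, using orthogonality for $k\le i$ and disjoint supports for $k\ge i+2$, that every entry $R_{ij}$ with $j>i$ collapses to $\hat T_{i+1,j}\langle u_i-\hat u_i,u^0_{i+1}\rangle$; you then control the resulting (generically dense) triangular matrix by the bound $\|\hat T\|_2\le\|L_0^{-1}\|_2\|\hat L\|_2=O(1)$, which you derive from $\kappa(\hat M)=O(1)$, $\mathrm{trace}(\hat M)=O(n/\tau)$, and $\lambda_{\min}(M_0)\ge c/\tau$. Both routes land on $\|R\|_F\le C\sqrt n\,\sup_i\|u_i-\hat u_i\|_\infty$ and hence the same final estimate, but your version is the more defensible one: the coefficient $\hat T_{i+1,j}$ of $u^0_{i+1}$ in $\hat u_j$ has no evident reason to vanish for $j\ge i+2$, so the strict bandedness of $R$ claimed in the paper would need a justification the paper does not supply, whereas your argument covers the general case. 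Your additional observation that $\sup_i\|u_i-\hat u_i\|_\infty$ stays bounded, so that $\epsilon$ in (\ref{epseq}) indeed tends to zero, addresses a hypothesis of Proposition \ref{mainprop} that the paper leaves implicit.
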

\begin{proof} We first show that $\hat{U}$, the vector of $L^2$ projections of $U$ onto the partial subspaces, is admissible. For $\hat{U} = U_0 \hat{T}$, it is clear that $\hat{T}$ is upper triangular by the definition of $\hat{U}$. Furthermore, from Lemma \ref{alphabounds2}, we know that the diagonal entries of $\hat{L}$ must be positive for $\tau$ small enough, since the diagonals of $L_0$ are clearly positive. Hence the diagonal entries of $\hat{T}=L_0^{-\top}\hat{L}^\top$ must be positive, and we can apply Proposition \ref{mainprop} and Lemma \ref{approxlem2}. As explained above, $R$ is not zero in this case, but is nonzero only on the two off diagonals. Holder's inequality gives 
$$| R_{i,i+1}|\leq \| u_i-\hat{u}_i\|_\infty\|\hat{u}_{i+1}\|_1\leq C\| u_i-\hat{u}_i\|_\infty\| {u}_{i+1}\|_1, $$
since $u_{i+1}$ and its projection are close in $L^2$, and are therefore close in $L^1$ on a bounded domain. Furthermore, each $u_{i+1}$ is bounded in $L^1$ since the initial waves are, see Remark \ref{L1remark}. Hence by possibly adjusting the constant we have $$| R_{i,i+1}|\leq C\| u_i-\hat{u}_i\|_\infty. $$ The estimate is done similarly for the subdiagonal. From the definition of the Frobenius norm and the sparsity of $R$, this yields
$$\| R\|_F\leq C \sqrt{n} \sup_i \| u_i-\hat{u}_i\|_\infty.$$ 
Using Proposition \ref{mainprop}, 
\begin{equation} { \| {\bf U}- U \|_2\over{\sqrt{n}} } \leq  \kappa(M)\left( {\| U-\hat{U}\|^2_2\over{\|U\|_2}} + C\sqrt{n} {\sup_i\| u_i-\hat{u}_i\|_\infty\over{\| U\|_2}}\right) +{\| U-\hat{U}\|_2\over{\sqrt{n}}}.  \end{equation} 
We know from direct calculation that $$  {\| U_0 \|_2\over{\sqrt{n}}} = {\sqrt{2/3}\over{\sqrt{\tau}}},  $$ so from the boundedness of ${\| U- U_0 \|_2\over{\sqrt{n}}} $ it follows that there exists $c>0$ such that 
$$  {\| U \|_2\over{\sqrt{n}}} \geq  c {1\over{\sqrt{\tau}}}.  $$ Hence we can adjust the constant $C$ so that 
\begin{equation} { \| {\bf U}- U \|_2\over{\sqrt{n}} } \leq  \kappa(M)\left( {\| U-\hat{U}\|^2_2\over{\|U\|_2}} + C\sqrt{\tau} \sup_i\| u_i-\hat{u}_i\|_\infty\right) +{\| U-\hat{U}\|_2\over{\sqrt{n}}}.  \end{equation} 
By Lemma \ref{approxlem2} and Lemma \ref{waveest}, 
\begin{equation} { \| {U}- \hat{U} \|_2\over{\sqrt{n}} } \leq  C {\tau}\kappa(M) {\| U\|_{2}\over{\sqrt{n}}}. \end{equation} 
and so the result follows from Lemma \ref{boundedmass2} and again adjusting the constant $C$.  \end{proof}

\section{Higher Dimensions} \label{sec:mimo}
The results in the Section \ref{sec:generalbounds} are for the SISO (single input/single output) setup, and hold in any dimension. For small enough $q$, the error in the data generated snapshots will be controlled by the error in the best approximation from the background snapshot space. In one spatial dimension, this is enough to obtain accurate approximations. In higher dimensions, snapshots coming from one source will not be rich enough to obtain a good approximation of the field, even if we were to have the best approximation exactly. (In \cite{DrMoZa3,DrMoZa4} it was shown numerically that for SISO data spherical averages of the fields can be reconstructed.)  In order to obtain accurate data generated solutions in higher dimensions, one needs to have more sources and receivers.  This will require a MIMO (multiple input/multiple output) setup with several source/receiver pairs. This is modeled by 
\begin{eqnarray} \label{wavemodel2}
u_{tt}  + (-\Delta +q)u&= &  0 \ \mbox{in}  \ \Omega\times [ 0,\infty) \\
u ( t=0) &=& g_j \ \mbox{in}\  \Omega  \\
u_t ( t=0) &=&  0\  \mbox{in}\  \Omega \end{eqnarray}
where $\Omega $ is a domain in $\mathbb{R}^d$ and each  $g_j$, for $j=1,\ldots K$, is a pulse localized near a point $x_j$. Denote by $u^{(i)}(x,t)$ the field corresponding to source $g_i$, and assume that we have the full response matrix
\begin{eqnarray}\label{datadef2} F^{ji} (k\tau) &=&  \int_\Omega g_j(x) u^{(i)}(x,t) dx \\ &=& \int_\Omega g_j(x) \cos{(\sqrt{-\Delta+q}k\tau)} g_i(x) dx ,\end{eqnarray}
for $j,i=1,\dots, K $, and $k=0,\ldots, 2n-1$. The component $F^{ji}(k\tau)$ represents the response at receiver $j$ from source $i$ at time $k\tau$.  Define $$u^{(i)}_k= u^{(i)}(x,k\tau)$$ to be the $k$th snapshot of $u^{(i)}$. 
For this problem, the mass tensor is given by 
\begin{equation} \label{massmatrixMIMO} M_{klij}= \int_\Omega u^{(i)}_k u^{(j)}_l  dx,  \end{equation}
for $k,l =0,\ldots, n-1$ and $i,j=1,\ldots,K$. The tensor $M$ can be viewed as an $n\times n$ matrix of blocks of size $K\times K$. The  $K\times K$ block $M_{kl}$ can be obtained directly from the data by the extension of (\ref{massfromdataj}) to blocks: 
\begin{equation} \label{massfromdataMIMO} M_{kl} = \frac{1}{2} \left( F((k-l)\tau) + F((k+l)\tau) \right),\end{equation} see for example \cite{BoGaMaZi}. Let $$U=[u^{(1)}_0, u^{(2)}_0, \ldots, u^{(K)}_0, u_1^{(1)}, u^{(2)}_1, \ldots, u^{(K)}_1, \ldots, u^{(1)}_{n-1}, u^{(2)}_{n-1},\ldots, u_{n-1}^{(K)}],$$ be a row vector of the true internal solutions, ordered first by time step and second by some fixed ordering of the sources. Similarly, let $$U_0=[(u_0^0)^{(1)}, \ldots, (u_0^0)^{(K)}, \ldots, (u^0_{n-1})^{(1)},\ldots, (u^0_{n-1})^{(K)}],$$ be the corresponding background fields, ordered in the same way, and let $M_0$ be the corresponding background mass matrix. To compute the data generated internal fields in this case,  we compute a block Cholesky decomposition, 
$$ M = L L^\top$$ 
where $L$ is block lower triangular.  Recall that in the SISO case the diagonal of $L$ is chosen to be positive, giving us uniqueness. In the block Cholesky case there is more non-uniqueness since there is a choice of the square root of the positive definite diagonal blocks. For the SISO problem, we saw that due to the asymptotics, this non-uniqueness in the  $\pm$ of the square root used in the Cholesky decomposition does not affect the final representation (which has diagonal asymptotically $= 1$).  In the MIMO case we believe that the matrix square root non-uniqueness will also not affect the final result, as the diagonals should be asymptotically the identity.

The choice of the square root of the diagonal blocks in the block Cholesky decomposition will determine the admissible space.  Once we have made such a choice, we perform a similar decomposition / orthogonalization for the background mass matrix
$$ M_0 =L_0 L_0^\top$$
and define the internal solutions generated directly from the data
$${\bf{U}} = {U}^0 (L_0)^{-\top} L^\top .$$
If $L$ are and $L_0$ are chosen to be lower triangular, the results of Section \ref{sec:generalbounds} carry over exactly. However, this choice is not physical since it would enforce spatial causality. Additionally, in practice conditioning issues can occur when the background solutions overlap. More analysis is required, and this is the subject of future work. 
\section{A Numerical Example}
We consider equation (\ref{waveeq}) in $\Omega=[0;200]$ with initial conditions and boundary conditions (\ref{waveeqic})-(\ref{waveeqid}) for piecewise linear $g(x)$ given by (\ref{initaldata2}) and $$q(x)=0.3e^{-0.04(x-70)^2}.$$ Though the initial condition is continuous, its derivative is discontinuous, so the numerical solution may exhibit the Gibbs effect. To avoid its propagation into the convergence study of our algorithm, we discretized (\ref{waveeq}) on a fine enough grid using $N=153600$ spatial grid steps. Then, for the corresponding spatial grid step $h=200/N$ we applied a finite-difference second-order scheme with time-step $\Delta t=h/2$ to generate the data on the time interval $[0;T]$ for $T=100$. We considered data sampling with $\tau=T/n$ for $n=75,~150,~300,~600$. In Fig.\ref{fig:sol} we plotted (for each of the sampling rates) for the terminal time $T$, the reconstructed solution $\bf{U}$, the true solution $U$, background solution $U_0$, and the causal projection $\hat{U}$ of $U$ onto the subspace of background snapshots. The reconstructed solutions match the true one pretty well, even for $n=75$. Then, in Fig. \ref{fig:err} we plotted the errors of reconstructions for various values of $n$ for the terminal time $T$. As one would expect, the error is mostly concentrated near the singular part of the solution, Finally, in Fig. \ref{fig:conv} we plotted the convergence rate in the $L_2$ norm.  As one can observe, it matches well with the $\sqrt{\tau}$ error decay.
\begin{figure}
    \centering
    \begin{tabular}{cc}
    \includegraphics[width=.45\textwidth]{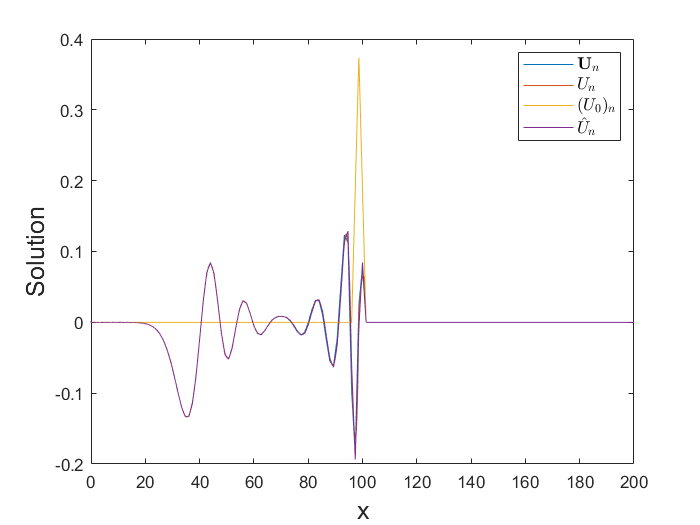} &
    \includegraphics[width=.45\textwidth]{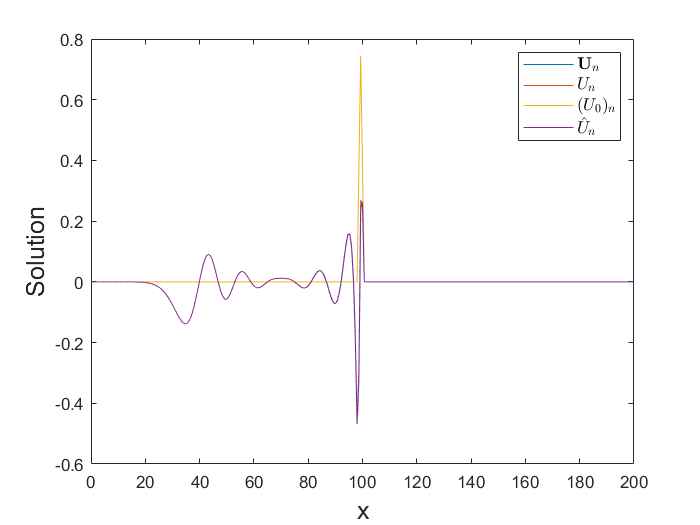} \\
    a) $n=75$ & b) $n=150$ \\
    \includegraphics[width=.45\textwidth]{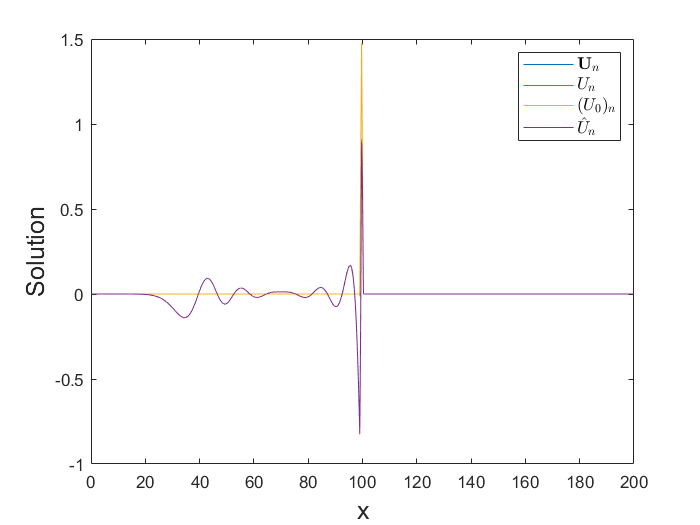} &
    \includegraphics[width=.45\textwidth]{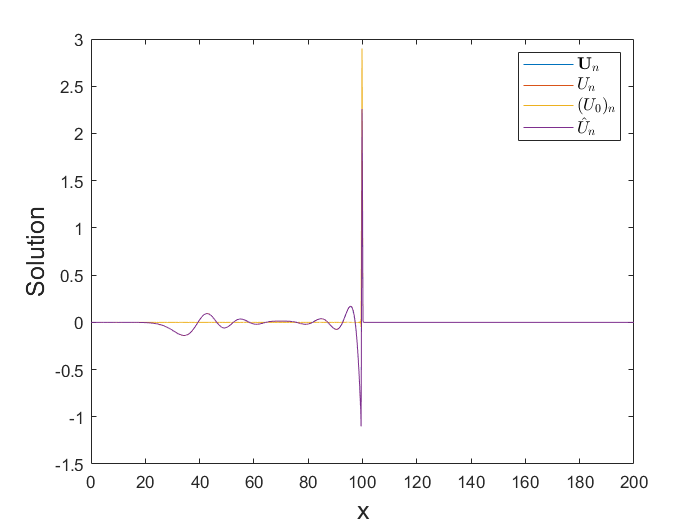} \\
    c) $n=300$ & d) $n=600$ \\
    \end{tabular}    
    \caption{Reconstructed solution ${\bf{U}}$, true solution $U$, background solution $U_0$ and causal projection $\hat{U}$ of $U$ onto the the subspace of background snapshots for the terminal time $T=100$ and the  samplings $n=75$ (a), $n=150$ (b), $n=300$ (c), $n=600$ (d).
    }
    \label{fig:sol}
\end{figure}
\begin{figure}
    \centering
    \includegraphics[width=\textwidth]{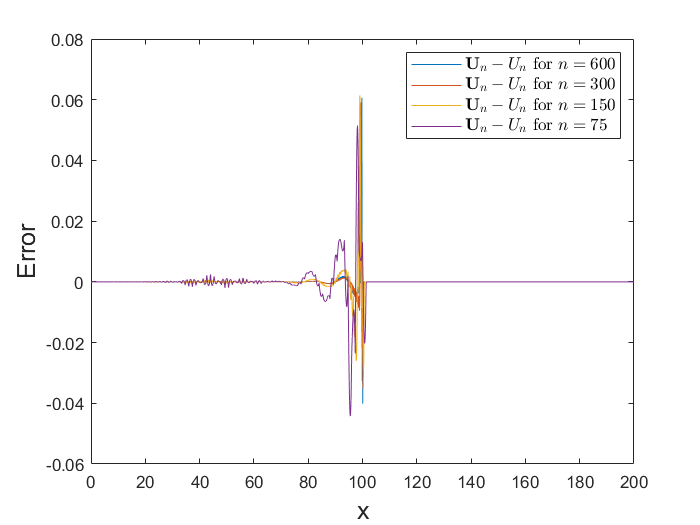}
    \caption{Errors of the reconstructed solutions for the terminal time $T=100$. As one can expect, the error is mostly concentrated near the singularity.
    }
    \label{fig:err}
\end{figure}
\begin{figure}
    \centering
    \includegraphics[width=\textwidth]{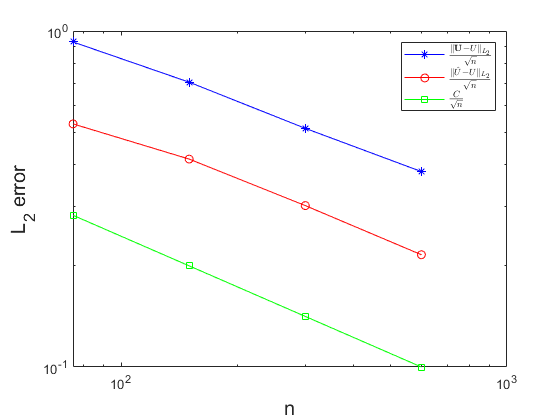}
    \caption{Convergence rates of the approximants ${\bf U}$ and causal projection $\hat{U}$. As one can observe, both are aligned pretty well with the $O(\sqrt{\tau})$ error estimate.
    }
    \label{fig:conv}
\end{figure}

\section{Discussion}
First, we should note that the more commonly used variable speed wave equation  $$\frac{1}{c^2(z)} w_{tt}-w_{zz}=0$$ can be transformed into the plasma wave equation via the application of consecutive travel-time and Liouville transforms, so our analysis can be extended to this problem as well. Extensions to variable wave-speed multidimensional formulations, however, would be more difficult due to the variable metric in geometric optics. Also, our analysis can be extended to the frequency domain problems in the Loewner framework  \cite{borcea2020reduced} by using the Lanczos-Cholesky analogy, and this is the subject of future work. 

As discussed in Section \ref{sec:mimo}, extensions of this work to the multidimensional MIMO formulations \cite{druskin2016direct} for the plasma wave equation are also possible using a block-Cholesky formulation. Due to the non-uniqueness of the square roots of the positive definite blocks and conditioning issues that occur when the background solutions begin to overlap, this will require more analysis. 

There is also a possibility of using our approach for the analysis of related approaches such as Marchenko redatuming \cite{Vargas2021ScatteringbasedFF} and boundary control
\cite{Belishev_2007,doi:10.1137/17M1151481}, but these 
are yet to be investigated.

\thanks{{\bf Acknowledgments.} The authors would like to thank Tarek Habashy for many helpful discussions. 
V. Druskin was partially supported by AFOSR grants FA 9550-20-1-0079, FA9550-23-1-0220,  and NSF grant  DMS-2110773. M. Zaslavsky was partially supported by  AFOSR grant  FA9550-23-1-0220. S. Moskow was partially supported by NSF grants DMS-2008441 and DMS-2308200.  }
\bibliographystyle{siamplain}
\bibliography{ROMbibliography}
\end{document}